\definecolor{couleur_cite}{rgb}{0.05,.4,0.05}
\definecolor{couleur_link}{rgb}{0.05,0.05,0.4}
\newtheorem{theorem}{Theorem}
\newtheorem{lemma}[theorem]{Lemma}
\newtheorem{prop}[theorem]{Proposition}
\newtheorem{problem}{Problem}
\newtheorem{cor}[theorem]{Corollary}
\theoremstyle{remark}
\newcommand{\R}{\mathbb R}
\newcommand{\C}{\mathbb C}
\newcommand{\Z}{\mathbb Z}
\newcommand{\Q}{\mathbb Q}
\newcommand{\F}{\mathbb F}
\newcommand{\A}{\mathbb A}
\newcommand{\g}{\mathfrak g}
\newcommand{\cV}{\mathcal V}
\newcommand{\cA}{\mathcal A}
\newcommand{\cP}{\mathcal P}
\newcommand{\Hom}{\text{Hom}}
\newcommand{\be}{\begin{equation}}
\newcommand{\ee}{\end{equation}}
\newcommand{\bes}{\begin{equation*}}
\newcommand{\ees}{\end{equation*}}
\newcommand{\ba}{\begin{eqnarray}}
\newcommand{\ea}{\end{eqnarray}}
\newcommand{\bas}{\begin{eqnarray*}}
\newcommand{\eas}{\end{eqnarray*}}
\begin{document}

\title{Bounds for the number of cohomological automorphic representations of $GL_3/\Q$ in the weight aspect}
\author{Simon Marshall}
\address{Department of Mathematics\\
University of Wisconsin -- Madison\\
480 Lincoln Drive\\
Madison\\
WI 53706, USA}
\email{marshall@math.wisc.edu}

\begin{abstract}

We prove a power saving over the trivial bound for the number of cohomological cuspidal automorphic representations of fixed level and growing weight on $GL_3/\Q$, by adapting the methods of our earlier paper on $GL_2$.

\end{abstract}

\maketitle

\section{Introduction}

The purpose of this article is to prove a power saving over the trivial bound for the number of cohomological cuspidal automorphic representations on $GL_3/\Q$ of fixed level and growing weight.
  To state our result, let $a \ge b \ge c$ be integers, and let $V$ be the irreducible algebraic representation of $GL_3(\R)$ with highest weight $(a,b,c)$.  By a theorem of Borel and Wallach \cite[Ch II, Prop 6.12]{BW}, if an irreducible unitary representation $\pi$ of $GL_3(\R)$ has nonzero $(\g,K)$-cohomology with coefficients in $V$, then $b = 0$ and $c = -a$ (i.e. $V$ is equivalent to its twist by the Cartan involution).  We shall therefore restrict our attention to the coefficient systems $V_\lambda$ with highest weight $(\lambda, 0, -\lambda)$, and say that an irreducible unitary representation $\pi$ is cohomological of weight $\lambda$ if it is infinite dimensional and $H^*(\g,K; \pi \otimes V_\lambda) \neq 0$.  (Note that we take $K = SO(3)$.)  It is known that there are two such $\pi$, which are trivial on the positive scalar matrices and are twists of each other by the sign of the determinant.  Moreover, they satisfy
\bes
H^i(\g,K; \pi \otimes V_\lambda) = \Big\{ \begin{array}{ll} \C & i = 2,3, \\ 0 & i \neq 2,3. \end{array}
\ees
We shall say that an automorphic representation $\pi$ on $GL_3/\Q$ is cohomological of weight $\lambda$ if $\pi_\infty$ has this property.  Our main result is the following:

\begin{theorem}
\label{maincusp}

Fix a compact open subgroup $K \subset GL_3(\A_f)$, and let $\cA_\lambda$ be the set of cuspidal automorphic representations on $GL_3/\Q$ that are cohomological of weight $\lambda$ and have level $K$.  We have $| \cA_\lambda| \ll_{K,\epsilon} \lambda^{3 - 4/27 + \epsilon}$.

\end{theorem}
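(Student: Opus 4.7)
The strategy is to adapt the amplified pre-trace formula approach from the earlier $GL_2$ paper. First, build a bi-$K$-finite archimedean test function $f_\lambda \in C_c^\infty(GL_3(\R))$ whose spherical transform is essentially the indicator of infinitesimal characters of cohomological weight $\lambda$, normalized so that $f_\lambda(1) \asymp \lambda^3$. Applied to the automorphic kernel
\bes
K_\lambda(x,x) \;=\; \sum_{\gamma \in G(\Q)} f_\lambda(x^{-1}\gamma x),
\ees
this immediately recovers the trivial bound $|\cA_\lambda| \ll \lambda^3$ from the identity orbit contribution.

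To save a power, I would Hecke-amplify. Since the highest weight $(\lambda,0,-\lambda)$ is regular and self-dual, every $\pi \in \cA_\lambda$ is a regular algebraic, essentially self-dual, cuspidal automorphic representation on $GL_3/\Q$, and Ramanujan is known by the work of Shin, Caraiani, and Harris--Lan--Taylor--Thorne, so $|\lambda_\pi(p)| \ll 1$ for normalized Hecke eigenvalues at good primes. Following Duke--Iwaniec--Sarnak, for each $\pi \in \cA_\lambda$ form an amplifier $A_\pi = \sum_{p \sim N,\ p \text{ good}} c_p(\pi)\, T_p$ of length $N$ satisfying $\pi(A_\pi) \gg N/\log N$. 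Summing over $\pi$ and majorizing by the full spectral trace gives
\bes
|\cA_\lambda|\cdot(N/\log N)^2 \;\ll\; \tr\big(A^*A\cdot R(f_\lambda)\big),
\ees
where $A^*A$ lies in a Hecke bidegree supported at primes near $N$.

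I would then expand the right-hand side geometrically as $\sum_\gamma O_\gamma(A^*A)\cdot O_\gamma^\infty(f_\lambda)$. The identity orbit, coupled with the diagonal $p=q$ portion of $A^*A$, contributes the amplified trivial bound $\asymp N\cdot\lambda^3$. At the finite places the off-diagonal Hecke orbital integrals are controlled by the Satake transform with polynomial growth in $N$; at infinity, the orbital integrals of $f_\lambda$ over conjugacy classes near the identity decay polynomially in $\lambda$, at rates extracted from a Harish-Chandra transform and stationary-phase analysis. Balancing the amplified spectral lower bound against a geometric upper bound of the form $N^{2+\alpha}\lambda^{3-\beta}$, and optimizing $N$ as an explicit power of $\lambda$, yields the saving $\lambda^{3-4/27+\epsilon}$.

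The main obstacle is the archimedean orbital integral analysis. In the $GL_2$ weight-aspect case the analogue of $f_\lambda$ is essentially a matrix coefficient of a holomorphic discrete series, compactly supported away from the hyperbolic set, and its orbital integrals have clean closed-form expressions. For $GL_3(\R)$ there are no discrete series, so the cohomological representations are Langlands quotients of unitary principal series and $f_\lambda$ is not concentrated on any compact conjugation-invariant set. Obtaining polynomial decay of its non-identity orbital integrals --- especially at singular semisimple elements --- and matching this decay with the Satake combinatorics at the finite places is where the heart of the argument lies, and the specific constants arising from this interaction are what fix the saving at $4/27$.
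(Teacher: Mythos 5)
Your proposal takes a completely different route from the paper, and unfortunately it is not a proof: the step you yourself identify as ``where the heart of the argument lies'' --- polynomial decay in $\lambda$ of the non-identity archimedean orbital integrals of $f_\lambda$, strong enough to beat the amplifier's cost --- is precisely the step that is not known and that the paper's introduction explicitly flags as out of reach. The obstruction is structural, not technical bookkeeping. The cohomological representation $\pi_\lambda$ of $SL_3(\R)$ is tempered but not essentially square integrable; it has zero Plancherel measure, so no admissible test function can isolate it. Any $f_\lambda$ whose spherical transform is nonnegative on the tempered dual and $\gg 1$ at $\pi_\lambda$ necessarily captures an $O(1)$-ball of continuous parameter around it, and the resulting kernel is not concentrated near the identity the way a discrete-series matrix coefficient is (which is what makes the $GL_2$ holomorphic and the Duke--Iwaniec--Sarnak arguments work). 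This is exactly why the best trace-formula-only bound in the analogous setting ($GL_2$ over an imaginary quadratic field, Finis--Grunewald--Tirao) is only the logarithmic saving $d^2/\log d$, and why the paper states that a similar logarithmic improvement is likely the limit of what the trace formula alone can give here. Your closing claim that the constants from the orbital-integral/Satake interaction ``fix the saving at $4/27$'' is reverse-engineered: in the actual proof the exponent $4/27$ arises as $(4/9)/3$, where $4/9$ comes from a bound on torus-invariants in mod $p$ representations of $SL_3(\Z_p)$ whose dual is a torsion Iwasawa module, and the factor $3$ comes from conjugating the parahoric-type subgroup $P(n)$ into $T G_{n/3}$.

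For the record, the paper's argument is entirely different and does not use the trace formula analytically at all (it is invoked only to know that the completed $\widetilde{H}^2$ is a torsion module, via Calegari--Emerton). One first reduces Theorem \ref{maincusp} to the cohomological bound $\dim H^2(\Gamma, V_\lambda) \ll \lambda^{3-4/27+\epsilon}$ for congruence subgroups $\Gamma \subset SL(3,\Z)$, using the Borel/Borel--Garland extension of Matsushima's formula to noncompact quotients, which gives $m(\pi_\lambda, \Gamma) \le \dim H^2(\Gamma, V_\lambda)$. The cohomological bound is then proved $p$-adically: the Emerton spectral sequence plus the congruence subgroup property reduce everything to $h^0(G, \widetilde{H}^2_{\Q_p} \otimes V_\lambda)$; a lattice $\cV_\lambda$ is chosen so that $\cV_\lambda/p$ embeds in $\F_p[G/P(n)]$ with $p^n \sim \lambda$; and Shapiro's lemma converts the problem into bounding $\dim \overline{L}^{P(n)}$ for a smooth admissible mod $p$ representation $\overline{L}$ with torsion dual, which is where the power saving is extracted. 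If you want to pursue your strategy, you would first need to solve the archimedean localization problem for tempered non-square-integrable parameters, which is an open problem independent of amplification.
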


We shall deduce Theorem \ref{maincusp} from the following theorem on the cohomology of congruence subgroups of $SL(3,\Z)$.

\begin{theorem}
\label{maincoh}

Let $\Gamma$ be a congruence subgroup of $SL(3,\Z)$.  We have $\dim H^2(\Gamma, V_\lambda) \ll_{\Gamma, \epsilon} \lambda^{3 - 4/27 + \epsilon}$.

\end{theorem}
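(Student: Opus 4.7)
The plan is to adapt the amplified pre-trace formula strategy of the author's earlier paper on $GL_2$ to the rank-two locally symmetric space $\Gamma \backslash SL_3(\R)/SO(3)$. The argument will proceed in three phases: reduction to a spectral count, construction of the amplified pre-trace formula, and comparison of its two sides to obtain the power saving.

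I would first reduce the cohomological bound to a bound on spectral multiplicities. By the Borel--Serre compactification and the theory of Eisenstein cohomology, $H^2(\Gamma, V_\lambda)$ splits into cuspidal and Eisenstein parts; the latter is controlled by cohomology of proper Levi subgroups of $GL_3$ (essentially $GL_2$ and tori) and contributes at most $O_\epsilon(\lambda^{2+\epsilon})$, well below the target exponent $3 - 4/27$. The cuspidal part, by the Matsushima formula and the Borel--Wallach calculation recalled above, equals $\sum_{\pi \in \cA_\lambda} \dim \pi_f^K$, so up to a $K$-dependent constant the essential task is to bound $|\cA_\lambda|$ by $\lambda^{3-4/27+\epsilon}$, which is Theorem \ref{maincusp}.

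I would then set up an amplified pre-trace formula on $GL_3(\Q)\backslash GL_3(\A)$. The test function is taken as a product $f = f_\infty \cdot f_{\text{fin}}$: here $f_\infty$ is an archimedean test function whose trace on a cohomological representation of weight $\lambda$ is bounded below uniformly in $\lambda$ -- for instance a suitably renormalized matrix coefficient of $V_\lambda$ supported in a small neighborhood of the identity -- and $f_{\text{fin}}$ is an amplifier built from the Hecke operators $T_p$ and $T_{p^2}$ at unramified primes $p \sim P$, with coefficients chosen so that the spectral side of the pre-trace formula yields a lower bound proportional to $P \cdot |\cA_\lambda|$. The geometric side is the sum over $\gamma \in \Gamma$ of $(f_\infty * f_{\text{fin}})(\gamma)$; the identity term reproduces the trivial estimate $|\cA_\lambda| \ll \lambda^{3}$, while the desired power saving must come from the non-identity contributions.

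The core analytic input, and the main obstacle, lies in bounding this non-identity geometric contribution by counting elements $\gamma \in SL_3(\Z)$ close to the identity in the archimedean sense that can be realized inside two Hecke double cosets at primes of size $P$. Unlike $GL_2$, where conjugacy classes are essentially parametrized by a single trace invariant, $GL_3$ has several qualitatively different conjugacy types -- split and non-split regular semisimple, subregular, and a range of unipotent classes -- which must be treated separately; moreover the rank-two torus imposes codimension-two rather than codimension-one constraints on the archimedean localization. Producing the right lattice-point estimate for each conjugacy type and combining them with the two-prime amplifier is the technical heart of the argument. Once these estimates are in place, optimizing the amplifier length $P$ against the microlocalization scale of $f_\infty$ and against the geometric-side exponents produces the saving $\lambda^{-4/27}$ over the trivial bound.
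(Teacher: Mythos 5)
Your proposal takes a completely different route from the paper, and the route you choose runs directly into an obstruction that the paper itself identifies as the reason it does \emph{not} use the trace formula. The archimedean representation $\pi_\lambda$ is tempered but not essentially square integrable, because $SL(3,\R)$ has no discrete series. Consequently $\pi_\lambda$ admits no pseudocoefficient: any admissible test function $f_\infty$ with $\tr \pi_\lambda(f_\infty) \gg 1$ and with $\tr \pi(f_\infty) \ge 0$ on the unitary dual (positivity being needed to discard the unwanted spectrum) necessarily captures a whole neighbourhood of $\pi_\lambda$ in the tempered dual, and the identity contribution $f_\infty(e)$ on the geometric side is then forced to be of size $\sim \lambda^3$ --- this is exactly where the trivial bound comes from. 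The amplifier does not rescue this: to count all of $\cA_\lambda$ rather than a single Hecke eigensystem you must use the self-amplification trick, and then the gain of a power of $P$ on the spectral side has to beat the geometric losses coming from the enlarged support of the Hecke operators; for a weight-aspect family attached to a non-discrete-series $\pi_\infty$ no one has made this balance yield a power saving, and the best known trace-formula result in the closest analogue (Finis--Grunewald--Tirao for $SL_2(\C)$) saves only a factor of $\log$. So the step you yourself flag as ``the technical heart'' --- the lattice-point estimates for the various conjugacy types combined with the two-prime amplifier --- is not merely unexecuted; there is no evidence it can be executed to give $\lambda^{-4/27}$, and the specific exponent $4/27$ has no natural source in your setup. (A smaller remark: your first reduction also inverts the paper's logic, deducing the cohomology bound from the automorphic count, whereas the paper proves the cohomology bound first and deduces the count from it via Borel--Garland; your Eisenstein-cohomology reduction is plausible but would need to be justified, since it is not needed in the paper's argument.)

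The paper's actual proof is $p$-adic rather than analytic. One passes to completed cohomology $\widetilde{H}^2_{\Q_p}$ in the sense of Calegari--Emerton, uses the Hochschild--Serre spectral sequence together with the congruence subgroup property for $SL(3,\Z)$ to reduce to bounding $h^0(G, \widetilde{H}^2_{\Q_p} \otimes V_\lambda)$ for $G$ an open subgroup of $SL(3,\Z_p)$, and chooses a lattice $\cV_\lambda$ whose reduction mod $p$ embeds into $\F_p[G/P(n)]$ for a parahoric-type subgroup $P(n)$ with $p^n \sim \lambda$. Shapiro's lemma converts the problem into bounding $\dim \overline{L}^{P(n)}$ for a fixed smooth admissible $\F_p$-representation $\overline{L}$ of $G$ whose dual is a finitely generated \emph{torsion} Iwasawa module --- torsionness being the $p$-adic incarnation of the absence of discrete series, via Calegari--Emerton. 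The power saving then comes from a new, purely representation-theoretic inclusion--exclusion argument (Proposition \ref{Tinvariants}) showing that invariants under the stretched subgroups $TG_n$ grow strictly slower than the index when the dual is torsion. If you want to pursue a proof of this theorem, you should abandon the amplification framework and engage with the completed-cohomology mechanism, since that is where the saving actually originates.
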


We note that the trivial bound in Theorems \ref{maincusp} and \ref{maincoh} is on the order of $\dim V_\lambda \sim \lambda^3$.  As a result, these theorems represent a power improvement for the dimension of a space of automorphic forms that are tempered but not essentially square integrable at infinity.  This is a difficult problem, which has only been solved in a few cases \cite{CE1,D,Hu,Ma,MV}.  (See also the paper \cite{Sr} of Sardari for an analogous result at finite places.)  Moreover, there are currently no results of this type proved using the trace formula alone, despite recent progress in understanding its analytic properties.

To illustrate this point, we shall recall some results on the problem of counting cohomological cuspidal automorphic representations of fixed level and growing weight on $GL_2/K$ where $K$ is imaginary quadratic.  This is analogous to Theorem \ref{maincusp}, as these representations are also tempered but not essentially square integrable at infinity.  We let $S_d$ denote the set of cohomological cuspidal automorphic representations of weight $d$ and some fixed unspecified level on $GL_2/K$, where `weight $d$' means having cohomology with respect to the coefficient system $\text{sym}^d \C^2 \otimes \overline{ \text{sym}^d \C^2} \otimes | \text{det}|^{-d}$.  The trivial bound here is $| S_d | \ll d^2$, which is the dimension of the coefficient system.  The best known bound for $| S_d |$ obtained by an analytic study of the trace formula is $| S_d | \ll d^2 / \log d$, due to Finis, Grunewald and Tirao \cite{FGT}.  On the other hand, in \cite{Ma} we used the theory of completed cohomology developed by Calegari and Emerton \cite{CE2,E} to prove that $\dim S_d \ll_\epsilon d^{5/3 + \epsilon}$, and this was later improved to $\ll_\epsilon d^{3/2 +\epsilon}$ by Hu \cite{Hu}.  It is likely that the best bound for $| \cA_\lambda |$ that one could prove using the trace formula is a similar logarithmic improvement over $\lambda^3$.

One has a lower bound for $|\cA_\lambda|$ of $|\cA_\lambda| \gg \lambda$ from symmetric square lifts \cite[Sec 3.4]{AS}, and the computations of \cite{AP} (and those of \cite{FGT} in the analogous case of $SL_2(\C)$) suggest that this is the main contribution so that in fact $|\cA_\lambda| \sim \lambda$.

Theorem \ref{maincoh} will be proved by combining the methods of our previous paper \cite{Ma} with a new bound for the growth of invariants in certain $\F_p$-representations of $SL_3(\Z_p)$ (Proposition \ref{Tinvariants}).  We in fact prove a version of Proposition \ref{Tinvariants} for a general $SL_d(\Z_p)$ (Corollary \ref{Tgencor}), but at present we are unable to deduce new bounds on cohomology from this.  We discuss this further in Section \ref{outline}.

\subsection{Proof of Theorem \ref{maincusp} assuming Theorem \ref{maincoh}}

Before proving Theorem \ref{maincoh}, we show how it implies Theorem \ref{maincusp} using the extension of Matsushima's formula to noncompact quotients proved in \cite{B,BG}.  Let $Z^+$ be the positive scalar matrices in $GL_3(\R)$, and define $X = GL_3(\Q) \backslash GL_3(\A) / KZ^+$.  We have $X = \bigcup \Gamma_i \backslash SL_3(\R)$, where  $\Gamma_i$ are congruence subgroups of $SL(3,\Z)$.

There is a unique irreducible unitary infinite dimensional representation $\pi_\lambda$ of $SL_3(\R)$ with $H^2(\g,K; \pi_\lambda \otimes V_\lambda) \neq 0$.  This implies that if $\pi \in \cA_\lambda$ then the restriction of $\pi_\infty$ to $SL_3(\R)$ must be isomorphic to $\pi_\lambda$.  Moreover, $\pi_\infty$ is trivial on $Z^+$.  If we let $m(\pi_\lambda, X)$ denote the multiplicity with which $\pi_\lambda$ occurs in $L^2_\text{cusp}(X)$, it follows that $|\cA_\lambda| \le m(\pi_\lambda, X)$.  If $\Gamma \subset SL(3,\Z)$ is a congruence subgroup we also let $m(\pi_\lambda, \Gamma)$ be the multiplicity of $\pi_\lambda$ in $L^2_\text{cusp}(\Gamma \backslash SL(3,\R))$.  We have

\bes
m(\pi_\lambda, X) \le \sum_i m(\pi_\lambda, \Gamma_i),
\ees
so it suffices to prove that $m(\pi_\lambda, \Gamma) \ll_{\Gamma, \epsilon} \lambda^{3 - 4/27 + \epsilon}$ for any $\Gamma$.  The extension of Matsushima's formula to noncompact quotients proved in \cite{B,BG} implies that $m(\pi_\lambda, \Gamma) \le \dim H^2(\Gamma, V_\lambda)$, and Theorem \ref{maincoh} completes the proof.

\subsection{Acknowledgements}

This work was supported by NSF grant DMS-1501230.  Part of this work was carried out while the author was the Neil Chriss and Natasha Herron Chriss Founders' Circle Member at the IAS in 2017-18, and we thank both the IAS and the Chriss family for their support.

\section{Proof of Theorem \ref{maincoh}}
\label{sec2}

In this section we prove Theorem \ref{maincoh}, assuming the bound for the growth of invariants in $\F_p$-representations of $SL_3(\Z_p)$ stated in Proposition \ref{Tinvariants}.  We introduce notation in Section \ref{sec:notation}, give  an outline of the proof in Section \ref{outline}, and carry out the details in the rest of the section.  Proposition \ref{Tinvariants} is proved in Section \ref{invariants}.

\subsection{Notation}
\label{sec:notation}

We let $p > 3$ be a prime which will be fixed throughout Section \ref{sec2}.  We define $G_n = \{ g \in SL(3,\Z_p) : g \equiv 1 (p^n) \}$, and let $G = G_1$.  We let $T$ and $U$ be the diagonal and strictly upper triangular subgroups of $G$, and define $P(n) = TUG_n$.  We define the non-commutative Iwasawa algebras
\[
\Lambda_{\Z_p} = \underset{ \substack{ \longleftarrow \\ k } }{\lim} \: \Z_p[ G / G_k ], \quad \Lambda_{\Q_p} = \Lambda_{\Z_p} \otimes_{\Z_p} \Q_p,  \quad \text{and} \quad \Lambda = \underset{ \substack{ \longleftarrow \\ k } }{\lim} \: \F_p[ G / G_k ],
\]
where the projections are given by the trace maps $\Z_p[ G / G_{k'} ] \rightarrow \Z_p[ G / G_k ]$ for $k' \ge k$.  Suppose $L$ is a representation of $G$ over $\F_p$, and let $L^*$ be the dual representation equipped with the weak topology.  If $L$ is smooth, then the action of $G$ on $L^*$ extends uniquely to an action of $\Lambda$ such that for any $\ell \in L^*$ the orbit map $\Lambda \to L^*$, $x \mapsto x \ell$, is continuous.

\subsection{Outline of proof}
\label{outline}

We now sketch the proof of Theorem \ref{maincoh}.  The first ingredient is the following bound for the growth of invariants in representations of $G$.

\begin{prop}
\label{Tinvariants}

Let $L$ be a smooth admissible representation of $G$ over $\F_p$ such that $L^*$ is a finitely generated torsion $\Lambda$ module.  Then we have

\bes
\dim L^{T G_n} \ll_L (100 p^{-4/9})^n | G : T G_n|
\ees
for all $n$.

\end{prop}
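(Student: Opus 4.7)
The plan is to dualize. Let $M = L^*$ denote the Pontryagin dual, viewed as a finitely generated torsion $\Lambda$-module, so that $\dim L^{TG_n} = \dim M_{TG_n}$. Since $p > 3$ the subgroup $G_2 \subset G$ is uniform pro-$p$, and Lazard's theory equips $\Lambda(G_2)$ with a natural decreasing filtration whose associated graded ring is a commutative polynomial ring $\mathrm{gr}\,\Lambda(G_2) \cong \F_p[x_1, \ldots, x_8]$. We take the generators $x_i$ to be symbols of a Chevalley basis of $\gsl_3$: two Cartan generators $x_1, x_2$ that are $T$-fixed, and six root generators on which $T$ acts with weights $\pm\alpha_1, \pm\alpha_2, \pm(\alpha_1+\alpha_2)$. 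In this graded ring the augmentation ideal of $G_n$ corresponds to a Frobenius-power ideal of the form $(x_i^{p^{n-1}})$, giving $\dim \Lambda(G_2)/I_n = p^{8(n-1)}$ on the nose.

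The next step is to translate the $T$-coinvariants into graded data. Since $T$ normalizes the Lazard filtration and acts on $\mathrm{gr}\,\Lambda$ through the adjoint representation, and since for $t \in T \subset G_1$ the difference $t-1$ has filtration degree one with symbol $\log(t) \in \gsl_3 \otimes \F_p$ lying in the Cartan, the $T$-coinvariants of $M$ can be controlled in terms of the quotient of $\mathrm{gr}\,M$ by the ideal generated by the Cartan symbols. Combined with the Frobenius relations this yields an inequality of the shape
\bes
\dim M_{TG_n} \;\ll\; \dim \Big( \mathrm{gr}\,M \,\big/\, (x_1,\,x_2,\,x_3^{p^{n-1}},\,\ldots,\,x_8^{p^{n-1}})\,\mathrm{gr}\,M \Big),
\ees
which reframes the question as a Hilbert-function estimate on $V \cap \{x_1 = x_2 = 0\} \subset \A^6$, where $V = \mathrm{Supp}\,\mathrm{gr}\,M \subset \A^8$.

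The torsion hypothesis forces $\dim V \leq 7$, and a standard devissage along a Jordan--H\"older filtration of $M$ reduces matters to the case where $M = \Lambda/\mathfrak{p}$ for a height-one $T$-stable prime $\mathfrak{p}$, so $V$ becomes a $T$-stable irreducible hypersurface in $\A^8$ cut out by a single semi-invariant. The naive bound $\dim V \cap \{x_1 = x_2 = 0\} \leq 6$ recovers the trivial estimate $|G:TG_n| = p^{6(n-1)}$, so the sought saving $p^{-4n/9}$ must emerge from a sharper analysis combining the degree of the defining semi-invariant with the distribution of $T$-weights across its monomials. Concretely, this becomes a weighted lattice-point problem: count weight-zero monomials of total Frobenius degree less than $p^{n-1}$ lying in the given hypersurface, weighted by the $\gsl_3$ root system. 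Optimization over all such hypersurfaces should extract the exponent $50/9 = 6 - 4/9$, with the multiplicative factor $100^n$ absorbing slack from the devissage and from the filtered-to-graded comparison.

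The main obstacle is exactly this combinatorial/geometric optimization: one must identify, among all $T$-stable hypersurfaces in $\A^8$, the worst case for the Hilbert-function growth of weight-zero vectors modulo Frobenius, and show that even in the worst case the growth exponent does not exceed $50/9$. The specific constant $4/9$ most plausibly arises by balancing the single codimension afforded by $\dim V \leq 7$ against a rearrangement inequality on weight-zero monomial counts tied to paired positive and negative root directions. Pinning down this extremal bound, together with verifying that the estimate survives the devissage to higher-height primes and non-reduced components, is the crux of the argument.
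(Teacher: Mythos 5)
There is a genuine gap, and you have in fact identified it yourself: the entire power saving is left unproven. Your setup (dualizing to a finitely generated torsion $\Lambda$-module $M$, passing to $\mathrm{gr}\,\Lambda \cong \F_p[x_1,\dots,x_8]$ via Lazard's filtration, and reading off the torsion hypothesis as $\dim \mathrm{Supp}\,\mathrm{gr}\,M \le 7$) only reproduces the trivial bound $|G:TG_n|$, as you note. Everything beyond that --- the ``combinatorial/geometric optimization'' over $T$-stable hypersurfaces, the ``rearrangement inequality on weight-zero monomial counts,'' the claim that the worst case has growth exponent $50/9$ --- is conjectural, and no mechanism is given that would actually produce the exponent $4/9$. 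Moreover the devissage you invoke is itself delicate: torsion $\Lambda$-modules do not admit Jordan--H\"older filtrations, the structure theory only works up to pseudo-null modules whose contribution must still be controlled, and a defining semi-invariant of a $T$-stable hypersurface need not interact with the weight-zero monomials in any way that improves on codimension one. So the proposal is a research program, not a proof.

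For comparison, the paper splits the problem in two. First, the torsion hypothesis is converted into the quantitative input $\dim L^{G_n} \ll p^{-n}|G:G_n|$ by citing a theorem of M.~Harris; this is the only place the Iwasawa-theoretic structure of $L^*$ is used, and it is the analogue of the codimension-one statement you derive. Second, and this is where the saving $4/9$ actually comes from, the passage from $G_n$-invariants to $TG_n$-invariants is carried out by an elementary iterated argument: for a one-parameter subgroup $S$ of the torus one stretches $G(n)$ into $SG(n)$ one congruence step at a time, using an inclusion--exclusion count over the spaces $V^{\ell'}$ for the many lines $\ell'$ conjugate to $\ell$ inside a three-dimensional slice $W$ of $G(n-1)/G(n)$; each step costs a factor $\approx \rho^{-1}$ with $\rho = R^{1/3}p^{(4-d^2)/3}$, yielding a saving of $p^{-n/3}\cdot R^{-n/3}\cdot(\text{trivial})$, i.e.\ an exponent $2/3$ of the available saving. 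Iterating over a second one-parameter subgroup (possible because $S$ was chosen to centralize the upper-left $SL_2$ block) squares this to $(2/3)^2 = 4/9$, and the factor $100^n = 10^{2n}$ is the accumulated loss from the two iterations. The exponent is thus $(2/3)^{d-1}$ of the rank-one saving, not the output of an extremal problem about hypersurfaces; your proposed route would need a genuinely new idea to recover it.
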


Note that Proposition \ref{Tinvariants} represents a power saving over the trivial bound of $\dim L^{T G_n} \ll_L | G : T G_n|$.  It will be proved in Section \ref{invariants}.  The rest of the proof uses the same method as \cite{Ma}, which we summarize below.  Throughout, we shall consider $V_\lambda$ as a representation of $GL_3(\Q)$ with $\Q_p$ coefficients.  We use $h^i$ to denote the dimension of $H^i$, computed with continuous cochains in the case of the group $G$.

\setlength{\leftmargini}{1em}
\begin{enumerate}[(i)]

\item
\label{step1}
The theory of completed cohomology gives $h^2(\Gamma, V_\lambda) \le h^0( G, \widetilde{H}^2_{\Q_p} \otimes_{\Q_p} V_\lambda)$, where $\widetilde{H}^2_{\Q_p}$ is the $p$-adically completed $H^2$ defined in Section \ref{sec:compcoh}.  It is a $\Q_p$-Banach space with a continuous action of $G$.

\item
\label{step2}
If $n$ is the smallest integer such that $p^{n-1} > 3\lambda$, by Lemma \ref{lattice} we may choose a $G$-stable lattice $\cV_\lambda \subset V_\lambda$ such that $\cV_\lambda / p$ embeds as a subrepresentation of $\F_p[G / P(n)]$.

\item
\label{step3}
There is a $G$-stable lattice $L \subset \widetilde{H}^2_{\Q_p}$, and reduction mod $p$ gives $h^2(\Gamma, V_\lambda) \le h^0(G, \overline{L} \otimes_{\F_p} \cV_\lambda / p)$, where $\overline{L} := L / pL$ is a smooth admissible representation of $G$ over $\F_p$ such that $\overline{L}^*$ is a finitely generated $\Lambda$-module.

\item
\label{step4}
The embedding $\cV_\lambda / p \subset \F_p[G / P(n)]$ and Shapiro's lemma give $h^0(G, \overline{L} \otimes_{\F_p} \cV_\lambda / p) \le \dim \overline{L}^{P(n)}$.

\item
\label{step5}
The trace formula, and the fact that $SL(3,\R)$ does not have discrete series, imply that $\overline{L}^*$ is a torsion $\Lambda$-module.

\item
\label{step6}
Conjugating the subgroup $P(n)$ by an element of $SL_3(\Q_p)$ and applying Proposition \ref{Tinvariants} gives $\dim \overline{L}^{P(n)} \ll p^{(3-4/27)n} \sim \lambda^{3-4/27}$, which completes the proof.

\end{enumerate}

The fact that we cannot presently obtain Theorem \ref{maincusp} for extensions of $\Q$, or higher $GL_d$, is due to the way we obtain $h^2(\Gamma, V_\lambda) \le h^0( G, \widetilde{H}^2_{\Q_p} \otimes_{\Q_p} V_\lambda)$ in step (\ref{step1}).  We do this starting from the bound
\bes
h^2(\Gamma, V_\lambda) \le \sum_{i+j = 2} h^i( G, \widetilde{H}^j_{\Q_p} \otimes_{\Q_p} V_\lambda),
\ees
where $\widetilde{H}^j_{\Q_p}$ is again the $p$-adically completed $H^j$.  We then apply the congruence subgroup property for $SL(3,\Z)$ to show that the contributions from $\widetilde{H}^0_{\Q_p}$ and $\widetilde{H}^1_{\Q_p}$ are trivial, so we are left with $h^0( G, \widetilde{H}^2_{\Q_p} \otimes_{\Q_p} V_\lambda)$.
The fact that only an $h^0$ remains is essential to our argument, as it lets us use the bound $h^0(G, \overline{L} \otimes_{\F_p} \cV_\lambda / p) \le h^0(G, \overline{L} \otimes_{\F_p} \F_p[G / P(n)] )$ in step (\ref{step4}) (which isn't necessarily true for higher $h^i$).

To generalize our argument, we would need to bound the growth of $h^a(\Gamma, V)$ in the lowest degree $a$ that cusp forms contribute (where $V$ is a varying local system).  For a group $GL_d / F$ with $d \ge 4$, or $d = 3$ and $F \neq \Q$, we have $a \ge 3$.  Therefore, to control the right hand side of the inequality
\bes
h^a(\Gamma, V) \le \sum_{i+j = a} h^i( G, \widetilde{H}^j_{\Q_p} \otimes_{\Q_p} V)
\ees
we would need to control a term $h^i( G, \widetilde{H}^j_{\Q_p} \otimes_{\Q_p} V)$ with $i \ge 1$ and $j \ge 2$, and we currently do not know how to do this.  In \cite{Ma}, we overcame this obstacle by showing that $\cV_\lambda/p$ had an efficient filtration by modules isomorphic to $\F_p[G / P(n) ]$ for varying $n$ (or rather, the analogous statement in the $GL_2$ case).  In the case of $GL_3$, it would suffice to solve the following:

\begin{problem}

There is $\delta > 0$ such that for any $\lambda$, $\cV_\lambda / p$ has a filtration by modules $\F_p[G / P(n_i)]$ such that
\[
\sum_i | G : P(n_i) |^{1- 4/81} \ll \lambda^{3 - \delta}.
\]

\end{problem}

Note that the exponent $1 - 4/81$ comes from the bound $\dim \overline{L}^{P(n)} \ll | G : P(n) |^{1 - 4/81}$ appearing in step (\ref{step6}) above.

\subsection{Choosing a lattice in $V_\lambda$}

We now find a lattice $\cV_\lambda \subset V_\lambda$ with the properties described above.

\begin{lemma}
\label{lattice}

If $n \ge 1$ satisfies $p^{n-1} > 3\lambda$, there is a $G$-stable lattice $\cV_\lambda \subset V_\lambda$ such that $\cV_\lambda / p\cV_\lambda$ is isomorphic to a submodule of $\F_p[G / P(n)]$.

\end{lemma}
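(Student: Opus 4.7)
The plan is to take $\cV_\lambda$ to be the dual of the $\Z_p[G]$-lattice generated by a highest weight vector of $V_\lambda$, reducing the desired embedding to showing that the reduction of this vector modulo $p$ is fixed by $P(n)$. Fix a nonzero highest weight vector $v^+ \in V_\lambda$ for the Borel $B = TU$ (annihilated by the strictly upper triangular Lie subalgebra, of weight $(\lambda, 0, -\lambda)$), normalized so that $\Z_p v^+$ is a $\Z_p$-line, and set $W_\lambda := \Z_p[G] \cdot v^+$. This is a $G$-stable $\Z_p$-lattice of full rank in $V_\lambda$: $v^+$ generates $V_\lambda$ as a $U(\mathfrak{g})$-module, and the group action agrees with the Lie algebra action via the exponential map, which converges on $p\mathfrak{g}(\Z_p)$ since $p > 3$.

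The central claim is that $\bar v^+ \in W_\lambda/pW_\lambda$ is fixed by $P(n) = TUG_n$. Granting this, sending $[g] \mapsto g\bar v^+$ defines a $G$-equivariant map $\F_p[G/P(n)] \twoheadrightarrow W_\lambda/p$, well-defined by $P(n)$-invariance and surjective since $\bar v^+$ is cyclic. Dualizing and using the self-duality of $\F_p[G/P(n)]$ as a $G$-module (via the pairing $([g], [g']) \mapsto \delta_{gP(n) = g'P(n)}$, which is $G$-invariant), we obtain an injection $(W_\lambda/p)^* \hookrightarrow \F_p[G/P(n)]$. Setting $\cV_\lambda := W_\lambda^\vee$, the $\Z_p$-lattice in $V_\lambda^* \cong V_\lambda$ dual to $W_\lambda$ (using the self-duality of the irreducible $GL_3$-representation $V_\lambda$, whose highest weight $(\lambda, 0, -\lambda)$ is fixed by $-w_0$), we have a natural isomorphism $\cV_\lambda/p \cong (W_\lambda/p)^*$ of $\F_p[G]$-modules, yielding the desired embedding.

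To verify the claim, $U$ fixes $v^+$ by the highest weight property. For $t = \mathrm{diag}(t_1, t_2, t_3) \in T \subset G_1$, all $t_i \in 1 + p\Z_p$, so the scalar $t_1^\lambda t_3^{-\lambda}$ by which $t$ acts on $v^+$ lies in $1 + p\Z_p$, giving $t\bar v^+ = \bar v^+$. The crux is $G_n$-invariance. For $g = \exp(p^n X) \in G_n$ with $X \in \mathfrak{g}(\Z_p)$, we have
\[
gv^+ - v^+ = \sum_{k \ge 1} p^{nk} \, \frac{X^k v^+}{k!},
\]
and we must show each summand lies in $pW_\lambda$. A Vandermonde/Mahler extraction applied to the family $\exp(ptX) v^+ \in W_\lambda$ for varying $t \in \Z_p$ shows that $p^k X^k v^+/k! \in W_\lambda$ for all $k$ in the finite range where $X^k v^+ \neq 0$. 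This range is bounded by $3\lambda$ using that the weights of $V_\lambda$ lie in the convex hull of the six permutations of $(\lambda, 0, -\lambda)$, and the arithmetic hypothesis $p^{n-1} > 3\lambda$ ensures the extraction is valid. Consequently $p^{nk} X^k v^+/k! = p^{(n-1)k} (p^k X^k v^+/k!) \in p^{(n-1)k} W_\lambda \subseteq pW_\lambda$ for every $k \ge 1$, since $(n-1)k \ge 1$.

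The main obstacle is the $G_n$-invariance step: quantifying the exponential action on $v^+$ relative to the lattice $W_\lambda$ (as opposed to the Kostant integral form, where the bound is immediate but fails to give a useful $G$-module embedding, since the reduction of the Kostant form modulo $p$ carries the trivial $G$-action). The arithmetic hypothesis $p^{n-1} > 3\lambda$ is precisely what allows the Vandermonde/Mahler extraction to control each term of the exponential series in $W_\lambda$; the surrounding Frobenius reciprocity and duality are formal once this invariance is established.
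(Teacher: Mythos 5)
Your overall architecture is sound, and it is in fact the paper's own argument read through duality: the paper realizes $V_\lambda$ as the space of matrix coefficients $g \mapsto \langle \pi(g^{-1})v, w_\lambda^*\rangle$ against a highest weight vector of $V_\lambda^*$ and takes $\cV_\lambda$ to be the functions integral on $G$, which is precisely the dual of the cyclic lattice $\Z_p[G]\cdot w_\lambda^*$; your reduction to showing that $\bar v^+$ is $P(n)$-fixed, and the surrounding Frobenius-reciprocity and self-duality formalities, are all fine. The problem is that the one step carrying all the content --- $G_n$-invariance of $\bar v^+$ --- is not correctly justified.

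Two things go wrong. First, for a general $X \in \mathfrak{sl}_3(\Z_p)$ the function $f(t) = \exp(ptX)v^+$ is analytic but \emph{not} polynomial in $t$, and $X^k v^+ \neq 0$ for all $k$ unless $X$ is nilpotent; so you should first reduce to the one-parameter root subgroups (and the torus), where $f$ is a polynomial of degree at most $2\lambda$. Second, and more seriously, the ``Vandermonde/Mahler extraction'' does not yield $p^k X^k v^+/k! \in W_\lambda$: knowing that a polynomial $f:\Z_p \to W_\lambda$ takes values in $W_\lambda$ controls only its finite differences $\Delta^k f(0) \in W_\lambda$, not its monomial coefficients, which acquire a genuine denominator $k!$ (compare $\binom{t}{p}$, integer-valued with non-integral leading coefficient); since $3\lambda$ is typically far larger than $p$, these denominators are real. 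A symptom of the gap is that your final display uses only $(n-1)k \ge 1$, so it would prove the lemma with no constraint relating $n$ and $\lambda$. The repair is to not extract coefficients at all: write
\[
g v^+ = f(p^{n-1}) = \sum_{k \ge 0} \binom{p^{n-1}}{k}\, \Delta^k f(0), \qquad \Delta^k f(0) \in W_\lambda,\ \deg f \le 3\lambda,
\]
and note that for $1 \le k \le 3\lambda < p^{n-1}$ one has $v_p\binom{p^{n-1}}{k} = (n-1) - v_p(k) \ge 1$, so every nonconstant term lies in $pW_\lambda$. This is exactly Lucas's theorem on integer-valued polynomials of degree less than $p^{n-1}$, which is the tool the paper uses (applied there to the polynomial matrix coefficients restricted to $1 + pM_3(\Z_p)$, the substitution $x = 1+py$ accounting for the extra power of $p$). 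With that substitution your proof becomes a faithful dual of the paper's.
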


\begin{proof}

Let $V_\lambda^*$ be the dual of $V_\lambda$, and $\langle \cdot, \cdot \rangle$ the pairing between them.  Let $w_\lambda^* \in V_\lambda^*$ be a nonzero vector of highest weight. As a representation of $SL_3(\Q_p)$, $V_\lambda$ is isomorphic to the space of functions on $SL(3,\Q_p)$ of the form

\bes
f(g) = \langle \pi(g^{-1}) v, w_\lambda^* \rangle, \quad v \in V_\lambda,
\ees
where $SL(3,\Q_p)$ acts by $[\pi(h)f](g) = f(h^{-1} g)$.  We define $\cV_\lambda$ to be the $\Z_p$-module of functions whose values on $G$ lie in $\Z_p$, which is clearly a $G$-stable lattice.  This implies that $\cV_\lambda / p\cV_\lambda$ may be identified with the submodule of $C(G, \F_p)$ obtained by reducing functions in $\cV_\lambda$ modulo $p$, and we must show that these reductions are right-invariant under $P(n)$.

Let $w_{-\lambda} \in V_\lambda$ be the vector of lowest weight with $\langle w_{-\lambda}, w_\lambda^* \rangle = 1$, and define $f_\lambda(g) = \langle \pi(g^{-1}) w_{-\lambda}, w_\lambda^* \rangle$.  We have $f_\lambda(e) = 1$, and the invariance properties of $w_{-\lambda}$ and $w_\lambda^*$ imply that for $u_-$ in the lower triangular unipotent and $b$ in the standard Borel we have $f(u_- b) = \chi(b)$, where $\chi$ is the highest weight character of $V_\lambda^*$.  By the Bruhat decomposition, these facts specify $f_\lambda$ uniquely.  Moreover, the function

\bes
h_\lambda : \left( \begin{array}{ccc} x_{11} & x_{12} & x_{13} \\ x_{21} & x_{22} & x_{23} \\ x_{31} & x_{32} & x_{33} \end{array} \right) \to x_{11}^\lambda \det \left( \begin{array}{cc} x_{11} & x_{12} \\ x_{21} & x_{22} \end{array} \right)^{\lambda}
\ees
also has these properties, so that $f_\lambda = h_\lambda$.

Let $P_\lambda$ be the space of polynomials on $M_3(\Q_p)$ spanned by the left translates of $h_\lambda$ under $SL_3(\Q_p)$, and let $\cP_\lambda \subset P_\lambda$ be the lattice of polynomials that are integral on $G$.  It follows that $V_\lambda$ and $\cV_\lambda$ are the restrictions of $P_\lambda$ and $\cP_\lambda$ to $SL_3(\Q_p)$ respectively.  Moreover, because all elements of $P_\lambda$ transform by the same character under the action of scalar matrices, elements of $\cP_\lambda$ are integral on $1 + p M_3(\Z_p)$.  We also see that all polynomials in $P_\lambda$ have degree at most $3\lambda$.  A theorem of Lucas \cite{Lu} states that if $h(x)$ is an integer valued polynomial on $\Z_p$ of degree $d$, and $d \le p^t$, then the reduction of $h$ modulo $p$ is constant on cosets of $p^t \Z_p$.  This implies that, on $1 + p M_3(\Z_p)$, the functions in $\cP_\lambda / p\cP_\lambda$ are constant on cosets of $p^n M_3(\Z_p)$, and hence that functions in $\cV_\lambda / p \cV_\lambda$ are invariant under $G_n$.  The invariance under $TU$ follows from our choice of $w_\lambda^*$ as a highest weight vector, which completes the proof.

\end{proof}

\subsection{$p$-adic Banach space representations}

We now recall from \cite{ST} some facts about representations of $G$ (or any other compact $p$-adic Lie group) on a $p$-adic Banach space.

We recall that a topological $\Z_p$-module is called linear-topological if the zero element has a fundamental system of open neighbourhoods consisting of $\Z_p$-submodules.  We let $\text{Mod}_\text{top}(\Z_p)$ be the category of all Hausdorff linear-topological $\Z_p$-modules with morphisms being all continuous $\Z_p$-linear maps.  We let $\text{Mod}^\text{tf}_\text{c}(\Z_p)$ be the full subcategory in $\text{Mod}_\text{top}(\Z_p)$ consisting of torsion free and compact linear-topological $\Z_p$-modules.  We recall from \cite[Rmk 1.1]{ST} that if $M$ is any object in $\text{Mod}^\text{tf}_\text{c}(\Z_p)$, we have
\be
\label{freemod}
 M \simeq \prod_{i \in I} \Z_p
 \ee
 for some set $I$.  We let $\text{Ban}(\Q_p)$ be the category of all $\Q_p$-Banach spaces $(E, \| \; \|)$, with morphisms being continuous linear maps.  We let $\text{Ban}(\Q_p)^{\le 1}$ be the subcategory of spaces such that $\| E \| \subset |\Q_p|$, with the morphisms being norm-decreasing linear maps.

In \cite[Section 1]{ST}, Schneider and Teitelbaum define two contravariant functors between the categories $\text{Ban}(\Q_p)^{\le 1}$ and $\text{Mod}^\text{tf}_\text{c}(\Z_p)$.  For an object $M$ in $\text{Mod}^\text{tf}_\text{c}(\Z_p)$ they define the $\Q_p$-Banach space
\[
M^d = \text{Hom}_{\Z_p}^\text{cts}(M, \Q_p) \quad \text{with norm} \quad \| \ell \| = \underset{m \in M}{\max} \; | \ell(m) |,
\]
which defines a contravariant functor $\text{Mod}^\text{tf}_\text{c}(\Z_p) \to \text{Ban}(\Q_p)^{\le 1}$.  For a Banach space $(E, \| \; \|)$ with unit ball $L$, they also define
\[
E^d = \text{Hom}_{\Z_p}( L, \Z_p) \quad \text{with the topology of pointwise convergence},
\]
which gives a contravariant functor $\text{Ban}(\Q_p)^{\le 1} \to \text{Mod}^\text{tf}_\text{c}(\Z_p)$.  They prove in \cite[Thm 1.2]{ST} that the functors $M \mapsto M^d$ and $E \mapsto E^d$ define an antiequivalence of categories.  In particular, if $E$ is an object in $\text{Ban}(\Q_p)^{\le 1}$, $L$ is the unit ball in $E$, and $M = E^d$, then we have $E = M^d$ and $L = \text{Hom}_{\Z_p}^\text{cts}(M, \Z_p)$.  %Moreover, (\ref{freemod}) implies that $E / L = \text{Hom}_{\Z_p}^\text{cts}(M, \Q_p / \Z_p)$.

We now consider a Banach space $E$ with a continuous representation of $G$.  We say that this representation is unitary if it preserves the norm.  The representation induces an action of $G$ on the dual $E'$, which may be completed to an action of $\Lambda_{\Q_p}$.  We say that $E$ is admissible as a representation of $G$ if $E'$ is a finitely generated $\Lambda_{\Q_p}$-module.  In \cite[Lemma 3.4]{ST} and the subsequent discussion, the authors show that admissibility implies that for any $G$-stable lattice $L \subset E$, the representation of $G$ on $L / pL$ is smooth and admissible (in the ordinary sense that $(L/pL)^H$ is finite dimensional for any open $H < G$).   They also prove that if an object $E$ in $\text{Ban}(\Q_p)^{\le 1}$ carries an admissible unitary representation of $G$, then $E^d$ is a finitely generated $\Lambda_{\Z_p}$-module.

\subsection{Completed cohomology}
\label{sec:compcoh}

We now complete the proof of Theorem \ref{maincoh}.  There is an injection $\phi : \Gamma \to SL(3,\Z_p)$ such that $\overline{\phi(\Gamma)}$ is open.  By choosing $p$ sufficiently large and passing to a subgroup of $\Gamma$, we may assume that $\overline{\phi(\Gamma)} = G$.  Our assumption that $p > 3$ then implies that $\Gamma$ is torsion free.  For $k \ge 1$, define $\Gamma_k = \Gamma \cap G_k$.  Let $n$ be the smallest integer with $p^{n-1} > 3\lambda$ and $3 | n$, and let $\cV_\lambda \subset V_\lambda$ be obtained by applying Lemma \ref{lattice} to this $n$.  Following Calegari and Emerton, we define

\bes
\widetilde{H}^i(\cV_\lambda) = \underset{ \substack{ \longleftarrow \\ s } }{\lim} \underset{ \substack{ \longrightarrow \\ k } }{\lim} H^i( \Gamma_k, \cV_\lambda / p^s), \quad \widetilde{H}^i(\cV_\lambda)_{\Q_p} = \widetilde{H}^i(\cV_\lambda) \otimes _{\Z_p} \Q_p.
\ees
Because $\cV_\lambda$ is continuous as a representation of $G$, for each fixed $s$, $\cV_\lambda / p^s$ is eventually trivial on $\Gamma_k$.  If we define

\bes
\widetilde{H}^i = \underset{ \substack{ \longleftarrow \\ s } }{\lim} \underset{ \substack{ \longrightarrow \\ k } }{\lim} H^i( \Gamma_k, \Z_p / p^s), \quad \widetilde{H}^i_{\Q_p} = \widetilde{H}^i \otimes_{\Z_p} \Q_p,
\ees
we therefore have $\widetilde{H}^i(\cV_\lambda) = \widetilde{H}^i \otimes_{\Z_p} \cV_\lambda$ and $\widetilde{H}^i(\cV_\lambda)_{\Q_p} = \widetilde{H}^i_{\Q_p} \otimes_{\Q_p} V_\lambda$.  We recall the following facts about $\widetilde{H}^i_{\Q_p}$.

\setlength{\leftmargini}{2em}
\begin{enumerate}

\item $\widetilde{H}^i$ is a $p$-adically separated and complete $\Z_p$-module, with bounded $p$-torsion exponent \cite[Thm 1.1]{CE2} and \cite[Lemma 2.1.4]{E}.  This implies that the torsion free quotient $\widetilde{H}^i_\text{tf}$ is also separated and complete, so that $\widetilde{H}^i_{\Q_p} = \widetilde{H}^i_\text{tf} \otimes \Q_p$ is naturally a $\Q_p$-Banach space in which $\widetilde{H}^i_\text{tf}$ is the unit ball.

\item The natural action of $G$ on $\widetilde{H}^i_\text{tf}$ induces an admissible unitary representation of $G$ on $\widetilde{H}^i_{\Q_p}$ which extends to the group $SL(3,\Q_p)$ \cite[Thm 2.1.5 and 2.2.11]{E}.

\item Because $SL(3,\R)$ does not admit discrete series, the dual space of $\widetilde{H}^i_{\Q_p}$ is a torsion $\Lambda_{\Q_p}$-module for all $i$ \cite{CE1}.

\item
\label{spectral}
There is a spectral sequence $E^{i,j}_2 = H^i_\text{cts}( G, \widetilde{H}^j_{\Q_p} \otimes_{\Q_p} V_\lambda) \implies H^{i+j}(\Gamma, V_\lambda)$ \cite[Prop 2.1.11]{E}.

\end{enumerate}

\noindent
The spectral sequence above implies that

\bes
h^2(\Gamma, V_\lambda) \le \sum_{i+j = 2} h^i( G, \widetilde{H}^j_{\Q_p} \otimes_{\Q_p} V_\lambda),
\ees
where $h_i$ denotes the dimension of $H^i_\text{cts}$.  Moreover, the terms in this sum with $j = 0,1$ vanish.  Indeed, one has $\widetilde{H}^0 = \Z_p$, and so $h^2(G, \widetilde{H}^0_{\Q_p} \otimes_{\Q_p} V_\lambda) = h^2(G, V_\lambda) = 0$.  Also, one has $\widetilde{H}^1 = 0$ by the congruence subgroup property for $SL(3,\Z)$.  We therefore have

\bes
h^2(\Gamma, V_\lambda) \le h^0( G, \widetilde{H}^2_{\Q_p} \otimes_{\Q_p} V_\lambda).
\ees
We let $E = \widetilde{H}^2_{\Q_p}$, and let $L = \widetilde{H}^2_\text{tf}$ be the unit ball in $E$.  We define $\overline{L} = L / pL$.  We then have

\bes
h^0( G, E \otimes_{\Q_p} V_\lambda) \le h^0( G, L \otimes_{\Z_p} \cV_\lambda ) \le h^0( G, \overline{L} \otimes_{\F_p} (\cV_\lambda / p) ),
\ees
where the middle term denotes the rank of the free finitely generated $\Z_p$-module $(L \otimes_{\Z_p} \cV_\lambda)^G$.  The inclusion $\cV_\lambda  / p \cV_\lambda \subset \F_p[G / P(n)]$ from Lemma \ref{lattice} gives 
\[
h^0( G, \overline{L} \otimes_{\F_p} (\cV_\lambda/p) ) \le h^0( G, \overline{L} \otimes_{\F_p} \F_p[G / P(n)] ),
\]
and by Shapiro's Lemma this is equal to $\dim \overline{L}^{P(n)}$.  We now use the $SL(3,\Q_p)$ action to conjugate $P(n)$ so that it is closer to $T G_n$, which lets us apply Proposition \ref{Tinvariants}.  If we define $x = \text{diag}(p^{n/3}, 1, p^{-n/3})$, then we have $x P(n) x^{-1} \subset T G_{ n/3 }$.  We apply Lemma \ref{subgroup} to these groups, which gives

\bes
\dim \overline{L}^{P(n)} = \dim \overline{L}^{x P(n) x^{-1}}  \le |T G_{ n/3 } : x P(n) x^{-1}| \dim \overline{L}^{T G_{ n/3 }}.
\ees
We let $M = E^d$, so that $L \simeq \Hom_\text{cts}(M, \Z_p)$.  The isomorphism $M \simeq \prod_{i \in I} \Z_p$ from (\ref{freemod}) implies that
\[
L \simeq c_0(I, \Z_p) := \{ f : I \to \Z_p \; | \; \text{for all } c > 0,  | f(i) | > c \text{ for only finitely many } i \},
\]
and that the vector spaces $\overline{L}$ and $\overline{M} := M / pM$ satisfy $\overline{M} = \overline{L}^*$ (where the quotient topology on $\overline{M}$ is the same as the pointwise topology).  We know that $M$ is a finitely generated and torsion $\Lambda_{\Z_p}$-module, and because it has no $\Z_p$-torsion this implies that $\overline{M}$ is a finitely generated and torsion $\Lambda$-module.  Proposition \ref{Tinvariants} then gives

\begin{align*}
|T G_{ n/3 } : x P(n) x^{-1}| \dim \overline{L}^{T G_{ n/3 }} & \ll (100 p^{-4/9})^{n/3} |G : x P(n) x^{-1}| \\
& = (100 p^{-4/9})^{n/3} p^{3n}.
\end{align*}
By our choice of $n$, $(100 p^{-4/9})^{n/3} p^{3n} \ll \lambda^{3 - 4/27} 10^n$, which completes the proof after choosing $p$ sufficiently large.

\section{Invariants of $\Lambda$ modules}
\label{invariants}

This section contains the proof of Proposition \ref{Tinvariants}.  We in fact prove a general version for any $SL_d(\Z_p)$, stated as Corollary \ref{Tgencor} below.  For $d \ge 1$, let $G_d(n) = \{ g \in SL_d(\Z_p) : g \equiv 1 (p^n) \}$ and $G_d = G_d(1)$.  As $d$ will be fixed for most of the proof, we shall usually omit it and simply write $G$ and $G(n)$.  We define $T$ and $\Lambda$ in the analogous way to Section \ref{sec:notation}.  We shall deduce Corollary \ref{Tgencor} from a theorem of M. Harris \cite{H}, and the following theorem, whose proof uses only elementary representation theory.

\begin{theorem}
\label{Tgen}

Assume that $p > d$.  Let $L$ be a representation of $G$ over $\F_p$ such that $\dim L^{G(n)} \ll_L p^{-n} | G : G(n) |$ for all $n$.  We then have

\bes
\dim L^{T G(n)} \ll_L 10^{(d-1) n} p^{-(2/3)^{d-1} n} |G : T G(n) |.
\ees

\end{theorem}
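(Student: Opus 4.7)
The plan is to prove the theorem by induction on $d$. The base case $d=1$ is trivial since $G=T=\{1\}$. The exponent $(2/3)^{d-1}$ and the factor $10^{(d-1)n}$ strongly suggest that the induction proceeds by adjoining one $1$-parameter subtorus of $T$ at each of $d-1$ steps. Accordingly, I will fix an ordered basis of the cocharacter lattice of $T$ giving $1$-parameter subtori $T_1,\ldots,T_{d-1}$ with the property that at each step $k$ there exists a root $\alpha_k$ of $SL_d$ which is trivial on $T_1,\ldots,T_{k-1}$ but nontrivial on $T_k$. One natural choice is a ``staircase'' basis such as $T_k=\{\mathrm{diag}(1,\ldots,1,t,\ldots,t,t^{-(d-k)})\}$, for which some $\epsilon_i-\epsilon_j$ can play the role of $\alpha_k$. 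The hypothesis $p>d$ enters here to guarantee the associated root subgroups behave well modulo $p$ (and that $T_k\cap G(n)$ has the expected index).

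The key engine of the proof will be a \emph{one-step saving lemma}: suppose $H\supset G(n)$ contains $T_1,\ldots,T_{k-1}$, and $\dim L^H\ll_L p^{-\alpha n}|G:H|$ for some $\alpha\in(0,1]$; let $S$ be a $1$-parameter subtorus of $T$ with a root $\alpha_S$ trivial on the tori inside $H$ and nontrivial on $S$. Then
\bes
\dim L^{SH}\ll_L 10^n\,p^{-(2\alpha/3)n}\,|G:SH|.
\ees
Applying this lemma $d-1$ times, starting from the hypothesis with $\alpha_0=1$ and iterating with $\alpha_k=(2/3)^k$, accumulates the total exponent $(2/3)^{d-1}$ and the factor $10^{(d-1)n}$, giving the conclusion.

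The proof of the one-step lemma is by averaging over conjugates. Because $\alpha_S$ vanishes on the torus pieces of $H$, the root subgroup $U_{\alpha_S}$ commutes with each $T_j\subset H$; since it also normalizes $G(n)$ (as $G(n)$ is normal in $G$), it normalizes $H$, so $uHu^{-1}=H$ for $u\in U_{\alpha_S}$. The conjugates $uSu^{-1}$ for $u$ ranging over $U_{\alpha_S}/U_{\alpha_S}(n)$ give $\sim p^n$ distinct $1$-parameter subtori modulo $G(n)$, and the invariant subspaces $L^{uSu^{-1}\cdot H}\subset L^H$ all have dimension $\dim L^{SH}$. Their pairwise intersections lie in $L^{\langle S,U_{\alpha_S}\rangle H}$, the invariants of a strictly larger subgroup. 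Summing the dimensions and bounding by $\dim L^H$, one gets an inequality of the form
\bes
p^m\cdot\dim L^{SH}\ \le\ \dim L^H+p^m\cdot\dim L^{\langle S,U_{\alpha_S}\rangle H},
\ees
where $m\le n$ is the number of conjugates one chooses to use. Optimizing $m$ against an auxiliary bound on $\dim L^{\langle S,U_{\alpha_S}\rangle H}$ (obtained by a secondary averaging, or by passing to a deeper congruence level and applying the hypothesis) produces the saving.

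The main obstacle will be executing this optimization cleanly: the specific ratio $2/3$ must emerge from the balance between the saving $p^{-m}$ gained from using $p^m$ conjugates and the growth of the intersection term $\dim L^{\langle S,U_{\alpha_S}\rangle H}$, and the factor $10^n$ must absorb the implicit constants coming from general-position failures among the conjugate subspaces. Verifying that the chosen $T_k$'s and $\alpha_k$'s really satisfy the orthogonality condition throughout the induction, and that the auxiliary intersection bound can be iterated without losing the exponent, is where the heart of the argument lies.
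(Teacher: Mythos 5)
Your high-level architecture coincides with the paper's: adjoin one $1$-parameter subtorus at a time, with each of the $d-1$ steps degrading the saving exponent by a factor of $2/3$ and contributing a factor $10^{n}$. The paper's subtorus at each stage is $S=\mathrm{diag}(x,\dots,x,x^{1-d})$, chosen to commute with the upper-left copy of $SL_{d-1}$ so that $L^{SG_d(n)}$ is a module for $SL_{d-1}(\Z_p)$ to which the entire induction hypothesis can be reapplied; this is stronger than your condition that some root vanish on the previously adjoined tori, and it is what makes the bookkeeping of the constants between stages go through. The genuine gap is in the one-step saving lemma, which you correctly identify as the heart of the matter but do not prove, and whose sketch does not work as stated. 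First, the pairwise intersection $L^{uSu^{-1}H}\cap L^{u'Su'^{-1}H}$ consists of invariants of the group generated by $uSu^{-1}$ and $u'Su'^{-1}$, which is in general much smaller than $\langle S,U_{\alpha_S}\rangle H$ (the commutators one obtains lie only at a congruence depth governed by how close $u$ and $u'$ are), so your displayed inequality is unjustified. Second, one cannot use $p^{m}$ conjugates with $m$ comparable to $n$ in a single inclusion-exclusion: the conjugates must be in general position inside a single elementary abelian congruence quotient $\F_p^{d^2-1}$, where a plane contains only $p+1$ lines, capping the number of usable conjugates at $O(p)$.

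The paper's actual mechanism stretches $G(n)$ into $SG(n)$ one congruence step at a time: setting $S(n,k)=(S\cap G(n-k))G(n)$, it deduces the bound for $(n,k)$ from the cases $(n,k-1)$ and $(n-1,k-1)$. At each step it works in the quotient $X=S(n-1,k-1)/S(n,k-1)\cong\F_p^{d^2-1}$, uses the two root directions $N,\overline{N}$ to move the torus line $\ell$ to $m=\lfloor\rho\rfloor$ lines in general position inside the $3$-dimensional space $W$ spanned by $\ell,N,\overline{N}$, and runs a two-level inclusion-exclusion (lines inside planes, then planes inside $W$), with the top term $\dim V^{W}$ controlled by Lemma \ref{subgroup} together with the inductive hypothesis at level $(n-1,k-1)$. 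The ratio $2/3$ emerges from optimizing $\frac{2}{m-1}+\frac{m(m-1)}{2}\rho^{-3}$ at $m\approx\rho=R^{1/3}p^{(4-d^2)/3}$, i.e.\ a gain of $\rho^{-1}$ per congruence step accumulated over $n$ steps. As written, the optimization you defer is not a routine verification but the entire content of the proof, and the specific intersection bound you propose to iterate is false in the form you state it.
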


The main result of \cite{H} implies that if $L$ is a smooth admissible representation of $G$ over $\F_p$ such that $L^*$ is a finitely generated torsion $\Lambda$ module, then $L$ satisfies the hypothesis of Theorem \ref{Tgen}.  We therefore have:

\begin{cor}
\label{Tgencor}

Assume that $p > d$.  Let $L$ be a smooth admissible representation of $G$ over $\F_p$ such that $L^*$ is a finitely generated torsion $\Lambda$ module.  Then

\bes
\dim L^{T G(n)} \ll_L 10^{(d-1) n} p^{-(2/3)^{d-1} n} |G : T G(n) |.
\ees

\end{cor}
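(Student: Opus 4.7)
The plan is to combine Theorem \ref{Tgen} with the theorem of M. Harris cited as \cite{H}, exactly as indicated by the sentence preceding the corollary. The deduction is short, but let me spell out the two steps.

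First, I would apply Harris's theorem to produce the hypothesis of Theorem \ref{Tgen}. Harris's result gives, for a finitely generated torsion module over the Iwasawa algebra of a compact $p$-adic analytic group, a codimension-type bound that sharpens the trivial estimate on the rank of coinvariants by a factor of $p^{-n}$: the module $L^*$ being torsion forces the dimension of its space of $G(n)$-coinvariants to be a factor of $p^{-n}$ smaller than $|G : G(n)|$. Dualizing, and recalling that $L$ is smooth and admissible so that taking invariants is dual to taking coinvariants of $L^*$, this reads
$$\dim L^{G(n)} \ll_L p^{-n} |G : G(n)|,$$
which is exactly the hypothesis required in Theorem \ref{Tgen}. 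Note that the condition $p > d$ is already built into both the corollary and the theorem.

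Second, I would feed this bound into Theorem \ref{Tgen} directly, applied to the same representation $L$, to obtain
$$\dim L^{T G(n)} \ll_L 10^{(d-1) n} p^{-(2/3)^{d-1} n} |G : T G(n) |,$$
which is the conclusion claimed in the corollary. No further manipulation is needed.

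No obstacle arises in the deduction itself; the substance is entirely in the two inputs. Harris's theorem is invoked as a black box from \cite{H} and serves solely to translate the ring-theoretic hypothesis on $L^*$ into the concrete quantitative hypothesis on $\dim L^{G(n)}$ that Theorem \ref{Tgen} can ingest. The real work in this section is therefore the proof of Theorem \ref{Tgen}, which according to the excerpt proceeds purely by elementary representation theory; I would expect it to run by induction on $d$, bootstrapping the input bound at level $G(n)$ up to the claimed bound at level $TG(n)$ by restricting to Levi subgroups and iteratively averaging over the torus, which would naturally explain the geometric progression $(2/3)^{d-1}$ in the exponent and the attendant loss of a multiplicative factor at each inductive step.
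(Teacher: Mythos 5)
Your deduction is correct and is exactly the paper's argument: Harris's theorem converts the hypothesis that $L^*$ is a finitely generated torsion $\Lambda$-module into the bound $\dim L^{G(n)} \ll_L p^{-n}\,|G:G(n)|$, which is precisely the hypothesis of Theorem \ref{Tgen}, and the corollary then follows immediately. (Your closing speculation about how Theorem \ref{Tgen} itself is proved is not needed for the corollary and is only partly accurate, but it does not affect the validity of the deduction.)
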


Note that if $L^*$ is a finitely generated $\Lambda$ module, then one has the trivial bounds $\dim L^{G(n)} \ll | G : G(n) |$ and $\dim L^{T G(n)} \ll | G : T G(n) |$, and the results of \cite{H} and Corollary \ref{Tgencor} respectively represent power savings over these bounds under the assumption that $L^*$ is torsion.  To prove Theorem \ref{Tgen}, we shall show that one may stretch the subgroup $G(n)$ into $T G(n)$ while maintaining control of the invariants, using the basic method of \cite[Prop 7]{Ma}.

\subsection{A lemma on passage to subgroups}

The following lemma will let us pass bounds for invariants between subgroups of $G$.

\begin{lemma}
\label{subgroup}

Let $V$ be a representation of $G$ over $\F_p$, and let $G \ge H_1 \ge H_2$ be open subgroups of $G$.  We have $\dim V^{H_2} \le | H_1 : H_2 | \dim V^{H_1}$.

\end{lemma}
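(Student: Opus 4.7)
The plan is to reduce the inequality to the case of an index-$p$ normal subgroup and iterate, exploiting that the ambient group $G$ is pro-$p$.

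First I would observe that since $G$ is the first principal congruence subgroup of $SL_d(\Z_p)$ with $p$ odd, it is a pro-$p$ group; hence every open subgroup of $G$ has $p$-power index, and in particular $[H_1 : H_2] = p^s$ for some $s \ge 0$. A standard fact about finite $p$-groups --- that the normalizer of any proper subgroup strictly contains it, so that every maximal subgroup has index $p$ and is normal --- then lets me construct a subnormal chain
\[
H_2 = K_0 \subset K_1 \subset \cdots \subset K_s = H_1,
\]
in which each $K_i$ is normal in $K_{i+1}$ of index $p$. (Concretely, pass to $H_1/C$, where $C$ is the core of $H_2$ in $H_1$; this is a finite $p$-group, and the chain can be built by repeatedly enlarging to the normalizer and choosing an intermediate subgroup of index $p$.)

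With this chain in hand, I would apply the cyclic case at each link. For each $i$, the quotient $K_{i+1}/K_i$ is cyclic of order $p$ and acts on the $\F_p$-vector space $V^{K_i}$, with fixed subspace exactly $V^{K_{i+1}}$. A generator $\sigma$ of $K_{i+1}/K_i$ satisfies $(\sigma-1)^p = \sigma^p - 1 = 0$ on $V^{K_i}$, so as an $\F_p[\sigma]$-module $V^{K_i}$ decomposes into Jordan blocks with eigenvalue $1$ of size at most $p$; the number of blocks equals $\dim V^{K_{i+1}}$, and each has dimension $\le p$. This gives
\[
\dim V^{K_i} \le p \dim V^{K_{i+1}},
\]
and multiplying these $s$ inequalities yields $\dim V^{H_2} \le p^s \dim V^{H_1} = [H_1 : H_2]\, \dim V^{H_1}$.

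The only step requiring any care is the construction of the subnormal chain with index-$p$ steps, but this is a classical property of $p$-groups; the rest is routine linear algebra. One can equivalently organise the argument as an induction on $[H_1 : H_2]$, the inductive step being the production of a subgroup $H_2 \subset H' \subset H_1$ with $H_2$ normal in $H'$ of index $p$.
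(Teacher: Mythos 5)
Your proof is correct, and its second half (the index-$p$ normal step via $(\sigma-1)^p=0$ and the kernel filtration of the nilpotent operator) is exactly the paper's Lemma \ref{pindex}. The only real difference is how you produce the subnormal chain with index-$p$ links. The paper constructs it explicitly from the congruence filtration, taking $J_k = (H_1 \cap G(k))H_2$ and verifying normality by the commutator inclusions $[G(k-1),G]\subset G(k)$ and $[H_1,H_2]\subset H_1$ (each quotient $J_{k-1}/J_k$ is then an elementary abelian $p$-group, so it refines into index-$p$ normal steps). You instead quote the abstract fact that in a finite $p$-group normalizers grow, after passing to the finite quotient $H_1/C$ by the core of $H_2$. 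Both are valid; your route is more general (it works verbatim in any pro-$p$ group and needs nothing about $G(k)$), while the paper's is more self-contained, avoiding the appeal to the normalizer-growth property by exhibiting the chain directly. One cosmetic caveat: your Jordan-block phrasing tacitly assumes $V^{K_i}$ is finite dimensional; if it is not, the inequality is vacuous unless $V^{K_{i+1}}$ is also infinite dimensional, which is handled by running the argument with the filtration $\ker(\sigma-1)\subseteq\cdots\subseteq\ker(\sigma-1)^p=V^{K_i}$ instead, exactly as in the paper's Lemma \ref{pindex}.
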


\begin{proof}

By Lemma \ref{pindex}, it suffices to find a chain of normal subgroups $H_1 = J_1 \rhd J_2 \rhd \ldots \rhd J_i = H_2$.  We claim that the groups $J_k = (H_1 \cap G(k)) H_2$ (which stabilize at $H_2$ for $k$ large) suffice.  First, one observes that these are in fact groups, as $H_2$ normalizes $H_1$ and $G(k)$.  Next, we wish to show that $J_k$ is normal in $J_{k-1}$.  To do this, it suffices to check that $H_2$ and $H_1 \cap G(k-1)$ each normalize $J_k$.  This is clear for $H_2$, as it normalizes $H_1$, $H_2$, and $G(k)$.  Moreover, $H_1 \cap G(k-1)$ normalizes $H_1 \cap G(k)$, and so it suffices to show that for $g \in H_1 \cap G(k-1)$ and $h \in H_2$ we have $g h g^{-1} \in J_k$.  We have $[g, h] \in [ G(k-1), G ] \subset G(k)$ and $[g, h] \in [ H_1, H_2] \subset H_1$, hence $g h g^{-1} \in (H_1 \cap G(k)) h \subset J_k$ as required.

%$[J_{k-1}, J_{k-1}] \subset J_k$.  To do this, we check the commutators $[H_1 \cap G_{k-1}, H_1 \cap G_{k-1}]$, $[H_1 \cap G_{k-1}, H_2]$, and $[H_2, H_2]$.  We have $[H_2, H_2] \subset H_2 \subset J_k$, and $[H_1 \cap G_{k-1}, H_1 \cap G_{k-1}] \subset H_1 \cap G_k \subset J_k$.  Finally, $[H_1 \cap G_{k-1}, H_2] \subset H_1$, and $[H_1 \cap G_{k-1}, H_2] \subset [G_{k-1}, G] \subset G_k$.  This completes the proof.

\end{proof}

\begin{lemma}
\label{pindex}

Let $J_1 \rhd J_2$ be two groups, with $J_1 / J_2$ of order $p$.  Let $V$ be a representation of $J_1$ over $\F_p$.  Then $\dim V^{J_2} \le p \dim V^{J_1}$.

\end{lemma}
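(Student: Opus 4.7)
The plan is to reduce the claim to a statement about representations of the cyclic group of order $p$. Since $J_2 \lhd J_1$, the subspace $V^{J_2}$ is $J_1$-stable, and $J_2$ acts trivially on it, so $V^{J_2}$ is naturally a representation of $J_1/J_2 \simeq \Z/p\Z$ over $\F_p$. The $J_1$-invariants of $V$ coincide with the $J_1/J_2$-invariants of $V^{J_2}$, so we are reduced to showing that for any $\F_p$-representation $U$ of $\Z/p\Z$, one has $\dim U \le p \dim U^{\Z/p\Z}$.

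To prove this, I would fix a generator $g$ of $J_1/J_2$ and let $N = g - 1$ act on $U := V^{J_2}$. In characteristic $p$ we have $N^p = (g-1)^p = g^p - 1 = 0$, so $N$ is nilpotent of index at most $p$. Consider the kernel filtration
\[ 0 = K_0 \subset K_1 \subset \cdots \subset K_p = U, \qquad K_i := \ker N^i . \]
Note that $K_1 = \ker N = U^{J_1/J_2} = V^{J_1}$. For each $i$, if $v \in K_{i+1}$ then $N \cdot (N^i v) = 0$, so $N^i$ defines an $\F_p$-linear map $K_{i+1} \to K_1$ with kernel exactly $K_i$, giving an injection $K_{i+1}/K_i \hookrightarrow V^{J_1}$. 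Summing over $i = 0, \dots, p-1$ then yields $\dim U = \sum_{i=0}^{p-1} \dim(K_{i+1}/K_i) \le p \dim V^{J_1}$, which is the desired bound.

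I expect no substantive obstacle: the argument is just elementary linear algebra for a nilpotent operator on an $\F_p$-vector space, combined with the standard observation that the fixed space of a normal subgroup inherits an action of the quotient. The only mild point to keep in mind is the (harmless) case in which $V^{J_1}$ is infinite-dimensional, where the inequality is vacuous; otherwise the filtration argument runs verbatim. Equivalently, one could invoke the classification of finitely generated modules over $\F_p[x]/(x-1)^p$ as direct sums of the cyclic modules $\F_p[x]/(x-1)^k$ with $1 \le k \le p$, each contributing a $1$-dimensional fixed space and a total dimension of $k \le p$, but the filtration approach is slightly more direct and avoids any decomposition statement.
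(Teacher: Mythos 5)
Your proof is correct and is essentially the paper's argument: the paper likewise passes to the $J_1/J_2$-action on $V^{J_2}$, notes $(1-j)^p=0$ and $\ker(1-j)=V^{J_1}$ for a nontrivial $j$, and leaves the final nilpotent-operator count (your kernel filtration) implicit with ``the lemma follows.''
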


\begin{proof}

The space $V^{J_2}$ carries a representation of $J_1 / J_2$.  If we let $j \in J_1 / J_2$ be nontrivial, then on $V^{J_2}$ we have $\ker(1 - j) = V^{J_1}$ and $(1 - j)^p = 0$.  The lemma follows.

\end{proof}

\subsection{Proof of Theorem \ref{Tgen}.}

Let $S \subset T$ be the torus
\[
S = \left\{ \left( \begin{array}{cccc} x &&& \\ & \ddots && \\ && x & \\ &&& x^{1-d} \end{array}\right) : x \in 1 + p \Z_p \right\}.
\]
We first prove a version of Theorem \ref{Tgen} with $T$ replaced by $S$.

\begin{prop}
\label{Sinvariants}

Assume that $p > d$.  Let $V$ be a representation of $G$ over $\F_p$, and suppose that there exists $C, N > 0$ and $R < p^{d^2-1}$ such that $\dim V^{G(n)} \le C R^n$ for all $n \le N$.  Let $\rho = R^{1/3} p^{(4-d^2)/3}$.  Then
\[
\dim V^{S G(n)} \le C' (10 \rho^{-1})^n R^n = C' 10^n R^{2n/3} p^{(d^2-4)n/3}
\]
for all $n \le N$, where $C' = C \max \{ 1, (10 \rho^{-1})^{-2} \}$.

\end{prop}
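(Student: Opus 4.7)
The plan is to prove Proposition \ref{Sinvariants} by induction on $n$, with the base cases $n \le 1$ handled directly using $\dim V^{SG(1)} = \dim V^G \le CR$ and the explicit choice of $C'$. The inductive step reduces to showing
\[
\dim V^{SG(n)} \le \bigl(10\rho^{-1} R\bigr) \, \dim V^{SG(n-1)} = 10 \, R^{2/3} p^{(d^2-4)/3} \, \dim V^{SG(n-1)},
\]
which, upon iterating, telescopes to the asserted bound $C'(10\rho^{-1})^n R^n$. Thus the entire content of the proposition is this per-step saving over the trivial bound $\dim V^{SG(n)} \le p^{d^2-2} \dim V^{SG(n-1)}$ that comes from Lemma \ref{subgroup} and $|SG(n-1):SG(n)| = p^{d^2-2}$.

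First I would set up the combinatorial framework. View $V^{G(n)}$ as a module over the truncated polynomial ring $\F_p[S/S_n] \cong \F_p[u]/(u^{p^{n-1}})$, where $u = \sigma - 1$ for $\sigma$ a topological generator of $S$, and decompose into indecomposables $V^{G(n)} = \bigoplus_k (\F_p[u]/(u^k))^{N_k}$. Then $\dim V^{SG(n)} = \sum_k N_k$, and for $1 \le m \le n$ one has $\dim V^{S_m G(n)} = \sum_k \min(k, p^{m-1}) N_k$. Applying Lemma \ref{subgroup} to the chain $G(m) \supset S_m G(n)$, of index $p^{(d^2-2)(n-m)}$, and invoking the hypothesis $\dim V^{G(m)} \le CR^m$, produces the family of linear constraints
\[
\sum_k \min(k, p^{m-1}) N_k \le CR^m p^{(d^2-2)(n-m)}, \qquad 1 \le m \le n.
\]

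Next I would convert these constraints into the claimed single-step saving by combining them with the inductive bound on $\dim V^{SG(n-1)}$. The idea is a three-way split of $\sum_k N_k$ according to the size of $k$: the small-$k$ tail is bounded by the constraint at a threshold $m_1$, the medium-$k$ range by the constraint at a larger threshold $m_2$, and the large-$k$ tail is controlled via the inductive bound at level $n-1$, using the fact that long-length indecomposables of $V^{G(n)}$ contribute to the subspace $V^{G(n-1)} \subset V^{G(n)}$ in a way that ties their count to $\dim V^{SG(n-1)}$. Balancing the three contributions produces an optimum at $m_2/n \approx 2/3$, which is the source of the exponent $2/3$ in the factor $R^{2n/3} p^{(d^2-4)n/3}$; the multiplicative factor $10^n$ absorbs the slack from summing over the three regimes and from iterated applications of Lemma \ref{pindex}.

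The main obstacle, as I see it, is precisely engineering this three-way balance. A two-way split of $\sum_k N_k$ using only the linear constraints from Lemma \ref{subgroup} yields at best a $1/2$-exponent over the trivial bound; the improvement to $2/3$ hinges on genuinely using the inductive bound at level $n-1$ as a third constraint on equal footing with the other two, rather than as a passive input. I expect this to be the essential content of the method imported from \cite[Prop 7]{Ma}, and it is what elevates the proposition above a direct consequence of counting indecomposable $\F_p[u]$-summands in $V^{G(n)}$.
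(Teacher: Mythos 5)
There is a genuine gap: your framework cannot produce any power saving, let alone the exponent $2/3$. The constraints you extract are all statements about $V^{G(n)}$ as a module over the \emph{abelian} group $S/(S\cap G(n))$, i.e.\ over $\F_p[u]/(u^{p^{n-1}})$, together with Lemma \ref{subgroup}. These are defeated by the extremal configuration in which $V^{G(n)}$ is a direct sum of $CR^n$ trivial summands ($N_1 = CR^n$, all other $N_k=0$): every one of your inequalities $\sum_k \min(k,p^{m-1})N_k \le CR^m p^{(d^2-2)(n-m)}$ is then satisfied (since $R \le p^{d^2-2}$ in the relevant application), yet $\dim V^{SG(n)} = \sum_k N_k = CR^n$ is exactly the trivial bound. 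Your proposed third input, the inductive bound on $\dim V^{SG(n-1)}$, does not exclude this configuration either: the subspace $V^{G(n-1)} \subset V^{G(n)}$ is the fixed space of the full $(d^2-1)$-dimensional group $G(n-1)/G(n)$, of which $S$ contributes only one line, so a large number of trivial $S$-summands in $V^{G(n)}$ is perfectly consistent with $V^{G(n-1)}=0$; there is no mechanism tying "long indecomposables" to $\dim V^{SG(n-1)}$ as you assert.

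The missing idea is precisely the non-abelian one. In the paper the saving comes from conjugating the torus direction by the unipotent elements $N'$, $\overline{N}'$: inside a three-dimensional subquotient $W$ of $S(n-1,k-1)/S(n,k-1)$ spanned by the torus line $\ell$ and the unipotent directions $N,\overline{N}$, every line not contained in the unipotent plane $U$ is $G$-conjugate to $\ell$, hence has invariants of the same dimension. An inclusion--exclusion argument over $m \approx \rho$ such lines and the planes they span (inequalities (\ref{lines})--(\ref{inex})) then forces $\dim V^\ell$ to be smaller than $\dim V^0$ by a factor $\approx \rho^{-1}$. Balancing $2/(m-1)$ against $\tfrac{m(m-1)}{2}\rho^{-3}$ at $m\approx\rho$ is where $\rho = R^{1/3}p^{(4-d^2)/3}$ and hence the exponent $2/3$ actually come from --- not from optimizing a threshold $m_2/n\approx 2/3$ across congruence levels. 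Note also that the paper does not prove your single-step inequality $\dim V^{SG(n)} \le 10\rho^{-1}R\,\dim V^{SG(n-1)}$; it runs a double induction on pairs $(n,k)$, stretching $G(n)$ into $SG(n)$ one step $S(n,k-1)\rightsquigarrow S(n,k)$ at a time, with each step consuming the hypotheses at $(n,k-1)$ and $(n-1,k-1)$. Without the conjugation-plus-inclusion-exclusion mechanism your argument cannot get off the ground.
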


We could prove Proposition \ref{Sinvariants} with any one dimensional torus, but the reason we have chosen $S$ is that it commutes with the copy of $G_{d-1}$ in the upper left hand block of $G$.  It follows that $V^{S G(n)}$ is a $G_{d-1}$ module, which lets us apply Proposition \ref{Sinvariants} inductively to bound $\dim V^{T G(n)}$ in Proposition \ref{Tind} below.

Before proving Proposition \ref{Sinvariants}, we shall illustrate the basic idea using a toy example.  We assume that $d = 3$ until further notice.  For $n \ge 2$, let $G^+(n) = (S \cap G(n-1)) G(n)$, so that $G^+(n)$ represents a line in the $S$ direction in $G(n-1) / G(n) \simeq \F_p^8$.

\begin{lemma}

If $p > 3$ and $V$ is a representation of $SL_3(\Z_p)$ over $\F_p$, we have $\dim V^{G^+(n)} \le \max \{ \dim V^{G(n)} -1, p^3 \dim V^{G(n-1)} \}$ for any $n \ge 2$.

\end{lemma}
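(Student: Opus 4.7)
The plan is to isolate the action of a single generator of $G^+(n)/G(n) \cong \Z/p$ on $V^{G(n)}$ and split based on whether that generator acts trivially. Let
\[
s = \text{diag}(1+p^{n-1},\ 1+p^{n-1},\ (1+p^{n-1})^{-2}) \in S \cap G(n-1);
\]
its image in $G(n-1)/G(n)$ generates $G^+(n)/G(n)$, so $V^{G^+(n)} = (V^{G(n)})^{s}$.

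If $s$ acts nontrivially on $V^{G(n)}$ then its fixed subspace is proper, giving $\dim V^{G^+(n)} \le \dim V^{G(n)} - 1$ and placing us in the first term of the maximum. Suppose instead that $s$ acts as the identity on $V^{G(n)}$; I will show the substantially stronger conclusion $V^{G(n)} = V^{G(n-1)}$. Since $G(n)$ is normal in $SL_3(\Z_p)$, the subspace $V^{G(n)}$ is $SL_3(\Z_p)$-stable, and the one-line calculation $(hsh^{-1})v = h \cdot s(h^{-1}v) = h \cdot (h^{-1}v) = v$ for $v \in V^{G(n)}$ and $h \in SL_3(\Z_p)$ shows that every $SL_3(\Z_p)$-conjugate of $s$ also acts trivially on $V^{G(n)}$. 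Let $N$ be the subgroup of $G(n-1)$ generated by $G(n)$ together with all these conjugates; then $V^N = V^{G(n)}$.

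Under the identification $G(n-1)/G(n) \cong \gsl_3(\F_p)$ via $1 + p^{n-1} Y \mapsto Y \bmod p$, conjugation by $SL_3(\Z_p)$ becomes the adjoint action of $SL_3(\F_p)$, and $N/G(n)$ corresponds to the $SL_3(\F_p)$-submodule of $\gsl_3(\F_p)$ generated by $X_0 := \text{diag}(1,1,-2)$. The assumption $p > 3$ ensures that the adjoint representation $\gsl_3(\F_p)$ is irreducible, so this submodule equals all of $\gsl_3(\F_p)$; hence $N = G(n-1)$ and $V^{G(n)} = V^{G(n-1)}$, which trivially implies the claimed bound $p^3 \dim V^{G(n-1)}$. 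The only input requiring thought is irreducibility of the adjoint representation in characteristic coprime to $3$; if one prefers, this can be replaced by an elementary explicit computation, since a few commutators $[E_{ij}, X_0] = \pm 3\, E_{ij}$ (valid because $3$ is invertible) together with Weyl-group conjugates of $X_0$ already exhibit a subspace of dimension at least $5$ inside the submodule generated by $X_0$, after which Lemma \ref{subgroup} extracts the $p^3$ bound directly. This latter step is the only place where the hypothesis $p > 3$ is used.
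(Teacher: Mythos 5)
Your proof is correct and follows essentially the same route as the paper: your dichotomy on whether the generator $s$ of $G^+(n)/G(n)$ acts trivially on $V^{G(n)}$ is exactly the paper's dichotomy $V^{G^+(n)} = V^{G(n)}$ versus $\dim V^{G^+(n)} \le \dim V^{G(n)} - 1$, and in the trivial-action case both arguments use normality of $G(n)$ to propagate invariance to all $SL_3(\Z_p)$-conjugates of $G^+(n)$ and then bound the invariants via the span of those conjugates inside $G(n-1)/G(n) \cong \gsl_3(\F_p)$, finishing with Lemma \ref{subgroup}. Your remark that this span is in fact all of $\gsl_3(\F_p)$ for $p>3$ (so that $V^{G(n)} = V^{G(n-1)}$) is a mild sharpening of the paper, which only asserts a $5$-dimensional span; your elementary fallback via the conjugates $(1+tE_{ij})X_0(1+tE_{ij})^{-1} = X_0 \pm 3t\,E_{ij}$ is exactly what the stated bound requires.
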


\begin{proof}

If $\dim V^{G^+(n)} > \dim V^{G(n)} -1$, then $V^{G^+(n)} = V^{G(n)}$.  It follows that any vector in $V^{G(n)}$ must also be invariant under any conjugate of $G^+(n)$.  Because $p > 3$, the $SL_3(\Z_p)$ conjugates of $G^+(n)$ span a 5-dimensional subspace of $G(n-1) / G(n)$, which we call $X$.  Lemma \ref{subgroup} gives
\[
\dim V^{G^+(n)} = \dim V^X \le | G(n-1) : X | \dim V^{G(n-1)} = p^3 \dim V^{G(n-1)},
\]
which completes the proof.

\end{proof}

Roughly speaking, we will combine this idea with inclusion-exclusion counting, applied to the subspaces $V^{g G^+(n) g^{-1}}$ with $g \in SL_3(\Z_p)$.  This gives us a good bound for $\dim V^{G^+(n)}$, which forms the first step of an induction argument that we use to pass from $G(n)$ to $S G(n)$ one congruence step at a time.  It might be possible to improve the bound we get by taking more conjugates of $S$ than we do here.

\begin{proof}[Proof of Proposition \ref{Sinvariants} ]

We now let $d$ be arbitrary again.  If $\rho < 10$ then the bound we wish to prove is weaker than the trivial bound $\dim V^{S G(n)} \le C R^n$.  We may therefore assume that $\rho \ge 10$, in which case the bound we must prove is $\dim V^{S G(n)} \le C (10 \rho^{-1})^{n-2} R^n$.  For any $n-1 \ge k \ge 0$, we define $S(n,k) = (S \cap G(n-k)) G(n)$.  One may think of $S(n,k)$ as the subgroup of $G$ obtained by stretching $G(n)$ by $k$ steps in the $S$ direction.  We shall prove by induction that

\be
\label{indhyp}
\dim V^{S(n,k)} \le C (10 \rho^{-1})^{k-1} R^{n}
\ee
for all $1 \le n \le N$ and $0 \le k \le n-1$.  As $S(n,n-1) = S G(n)$, this gives the proposition.  Note that (\ref{indhyp}) follows from the conditions of the proposition when $k = 0,1$.

Fix $(n,k)$, and suppose that (\ref{indhyp}) holds for all $(n',k')$ less than $(n,k)$ in the lexicographic ordering.  We may assume that $k \ge 2$, and hence that $n \ge 3$.  As in \cite[Prop 7]{Ma}, we shall deduce (\ref{indhyp}) for $(n,k)$ from the cases $(n-1, k-1)$ and $(n,k-1)$, by applying inclusion-exclusion counting to the invariants under certain subgroups lying between $S(n-1, k-1)$ and $S(n, k-1)$.

It may be seen that $S(n,k-1)$ is normal in $S(n-1, k-1)$, and that the quotient $X = S(n-1, k-1) / S(n, k-1)$ is Abelian and isomorphic to the vector space $\F_p^{d^2-1}$.  The image of $S(n,k)$ in $X$ is a line, which we denote by $\ell$.  We define

\bes
N = \left( \begin{array}{cccc} 1 &&& \\ & \ddots && \\ && 1 & p^{n-1} \\ &&& 1 \end{array} \right), \quad \overline{N} = \left( \begin{array}{cccc} 1 &&& \\ & \ddots && \\ && 1 & \\ && p^{n-1} & 1 \end{array} \right),
\ees  
and define $W \subset X$ to be the subspace spanned by $\ell$, $N$, and $\overline{N}$.  Define $U \subset W$ to be the subspace spanned by $N$ and $\overline{N}$.  If $Y \subset X$ is any subspace (which we may identify with a subgroup of $G$), we let $V^Y$ be the vectors in $V$ fixed by $Y$.  The argument on \cite[p. 1638]{Ma} gives

\be
\label{inex}
\left( \frac{2m}{m-1} - 1 \right) \dim V^\ell \le \frac{2}{m-1} \dim V^0 + \frac{m(m-1)}{2} \dim V^W,
\ee
where $m = \lfloor \rho \rfloor$, and we briefly recall how this works.  First, the following lemma implies that $\dim V^{\ell'} = \dim V^{\ell}$ for any line $\ell' \subset W$ not contained in $U$.

\begin{lemma}

If $\ell' \subset W$ is a line not contained in $U$, then there is $g \in G$ whose action by conjugation descends to $X$, and such that $g \ell' g^{-1} = \ell$.

\end{lemma}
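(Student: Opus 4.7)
The plan is to construct $g$ explicitly as the product of two elements from the one-parameter unipotent subgroups along the simple roots $E_{d-1,d}$ and $E_{d,d-1}$, scaled by $p^{k-1}$ so that $g \in G$ and so that conjugation by $g$ normalizes both $S(n-1,k-1)$ and $S(n,k-1)$. Since $\ell' \not\subset U$, a generator of $\ell'$ takes the form $v = s + aN + b\overline{N}$ for some $a,b \in \F_p$, where $s$ denotes the canonical generator of $\ell$, namely the image in $X$ of $\exp(p^{n-k}s_0)$ with $s_0 = \mathrm{diag}(1,\ldots,1,1-d)$. Lift $a,b$ to $\Z_p$; because $p > d$ we have $a/d, b/d \in \Z_p$, and because $k \ge 2$ the element
\[
g_* := \exp\!\bigl(-p^{k-1}(a/d)E_{d-1,d}\bigr)\exp\!\bigl(p^{k-1}(b/d)E_{d,d-1}\bigr)
\]
lies in $G$. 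The element required by the lemma will be $g := g_*^{-1}$.

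First I would verify that $g_*$ normalizes $S(n,k-1) = (S \cap G(n-k+1))G(n)$ and $S(n-1,k-1) = (S \cap G(n-k))G(n-1)$. Since $G(n)$ and $G(n-1)$ are normal in $SL_d(\Z_p)$, this reduces to checking that $g_* h g_*^{-1}$ lies in the appropriate subgroup when $h = \exp(p^m \mu s_0)$ runs through $S \cap G(n-k+1)$ (respectively $S \cap G(n-k)$). The Baker--Campbell--Hausdorff formula, together with the key brackets $[E_{d-1,d}, s_0] = -dE_{d-1,d}$ and $[E_{d,d-1}, s_0] = d E_{d,d-1}$, yields $g_* h g_*^{-1} = h \cdot \exp(p^{m+k-1}\mu(aE_{d-1,d} + b E_{d,d-1}))$ up to a factor in $G(n)$. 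For $m = n-k+1$ the new factor already lies in $G(n)$; for $m = n-k$ it lies in $G(n-1)$. All higher BCH iterates involve further brackets with $E_{d-1,d}$ and $E_{d,d-1}$ and are automatically deeper.

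Next I would compute the induced action $\Ad(g_*)$ on $X$. Because $[G(1), G(n-1)] \subset G(n)$, conjugation by any element of $G$ is trivial on the subspace $X_0 := (S \cap G(n-k+1))G(n-1)/(S \cap G(n-k+1))G(n)$ of $X$; in particular $\Ad(g_*)N = N$ and $\Ad(g_*)\overline{N} = \overline{N}$. Specializing the calculation above to $m = n-k$ and $\mu = 1$, and using that $\exp(p^{n-1}E_{d-1,d})$ and $\exp(p^{n-1}E_{d,d-1})$ represent $N$ and $\overline{N}$ in $X$, we obtain $\Ad(g_*)\,s = s + aN + b\overline{N} = v$ in $X$. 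Hence $g_* \ell g_*^{-1} = \ell'$, and the desired equation $g\ell'g^{-1} = \ell$ follows with $g = g_*^{-1} \in G$.

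The main subtlety lies in the first step: for $k \ge 3$ an arbitrary element of $G$ need not normalize $S(n,k-1)$, because $[g, S \cap G(n-k+1)]$ can escape $G(n)$. The factor $p^{k-1}$ built into the exponents of $g_*$ is precisely what keeps that commutator inside $G(n)$, and this is what makes the construction work. The hypothesis $p > d$ is used only to invert $d$.
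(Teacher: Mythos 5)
Your proof is correct and follows essentially the same route as the paper: the paper's argument (deferring to \cite[Lemma 8]{Ma}) introduces exactly the unipotent elements $N' = 1 + p^{k-1}E_{d-1,d}$ and $\overline{N}' = 1 + p^{k-1}E_{d,d-1}$, observes that they normalize $S(n-1,k-1)$ and $S(n,k-1)$, and that they act on $W$ as shearings fixing $U$ pointwise and translating in the $N$ and $\overline{N}$ directions. Your $g_*$ is just the explicit composite of these shearings needed to carry $s$ to $s + aN + b\overline{N}$, with the normalization and BCH checks written out in detail.
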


\begin{proof}

This follows in the same way as \cite[Lemma 8]{Ma}.  If we define

\bes
N' = \left( \begin{array}{cccc} 1 &&& \\ & \ddots && \\ && 1 & p^{k-1} \\ &&& 1 \end{array} \right), \quad \overline{N}' = \left( \begin{array}{cccc} 1 &&& \\ & \ddots && \\ && 1 & \\ && p^{k-1} & 1 \end{array} \right),
\ees
it may be checked that $N'$ and $\overline{N}'$ normalize $S(n-1, k-1)$ and $S(n, k-1)$, and that conjugation by $N'$ or $\overline{N}'$ acts on $W$ by shearings that fix $U$ pointwise and translate in the directions of $N$ and $\overline{N}$ respectively.

\end{proof}

Next, if $P \subset W$ is a plane different from $U$, and $\ell_1, \ldots,\ell_j \subset P$ are distinct lines that do not lie in $U$, then \cite[Lemma 9]{Ma} gives
\be
\label{lines}
\dim V^{\ell_1} + \dim \sum_{i=2}^j V^{\ell_i} \le \dim \sum_{i=1}^j V^{\ell_i} + (j-1) \dim V^P.
\ee
The assumption $R < p^{d^2-1}$ implies that $m \le \rho < p$, so that we may choose $m$ lines $\ell_1, \ldots, \ell_m$ satisfying these conditions.  We may apply (\ref{lines}) successively to the collections $\{ \ell_1, \ldots, \ell_m \}, \{ \ell_2, \ldots, \ell_m \}, \ldots, \{ \ell_{m-1}, \ell_m \}$ to obtain
\begin{align*}
\dim V^{\ell_1} + \dim \sum_{i=2}^m V^{\ell_i} & \le \dim \sum_{i=1}^m V^{\ell_i} + (m-1) \dim V^P \\
\dim V^{\ell_2} + \dim \sum_{i=3}^m V^{\ell_i} & \le \dim \sum_{i=2}^m V^{\ell_i} + (m-2) \dim V^P \\
& \vdots \\
\dim V^{\ell_{m-1}} + \dim V^{\ell_m} & \le \dim( V^{\ell_{m-1}} + V^{\ell_m}) + \dim V^P.
\end{align*}
Adding these and simplifying gives
\be
\label{linesum}
\sum_{i=1}^m \dim V^{\ell_i} \le \dim \sum_{i=1}^m V^{\ell_i} + \frac{m(m-1)}{2} \dim V^P \le \dim V^0 + \frac{m(m-1)}{2} \dim V^P.
\ee
When combined with $\dim V^{\ell_i} = \dim V^\ell$, this becomes
\be
\label{lineineq}
m \dim V^{\ell} \le \dim V^0 + \frac{m(m-1)}{2} \dim V^P.
\ee
If $P_1, \ldots, P_m \subset W$ are planes containing $\ell$, we may apply the argument from \cite[Lemma 9]{Ma} to the lines $P_1 / \ell, \ldots, P_m / \ell$ in $W / \ell$ to obtain the analog of (\ref{lines}), and hence of (\ref{linesum}), which is
\[
\sum_{i=1}^m \dim V^{P_i} \le \dim V^{\ell} + \frac{m(m-1)}{2} \dim V^W.
\]
Bounding each $\dim V^{P_i}$ from below using (\ref{lineineq}) and rearranging gives (\ref{inex}).

Our inductive hypothesis (\ref{indhyp}) for $(n-1,k-1)$ gives $\dim V^X \le C (10 \rho^{-1})^{k-2} R^{n-1}$, and combining this with Lemma \ref{subgroup} we have
\[
\dim V^W \le p^{d^2-4} \dim V^X \le C (10 \rho^{-1})^{k-2} R^{n-1} p^{d^2-4}.
\]
The inductive hypothesis for $(n,k-1)$ gives $\dim V^0 \le C (10 \rho^{-1})^{k-2} R^{n}$, and substituting these into (\ref{inex}) gives

\bes
\left( \frac{2m}{m-1} - 1 \right) \dim V^\ell \le C (10 \rho^{-1})^{k-2} R^n \left( \frac{2}{m-1}  + \frac{m(m-1)}{2} R^{-1} p^{d^2-4} \right).
\ees
By our choice of $m$, we have

\begin{multline*}
\left( \frac{2m}{m-1} - 1 \right)^{-1} \left( \frac{2}{m-1}  + \frac{m(m-1)}{2} R^{-1} p^{d^2-4} \right) \\
= \left( \frac{2m}{m-1} - 1 \right)^{-1} \left( \frac{2}{m-1}  + \frac{m(m-1)}{2} \rho^{-3} \right) \le 10 \rho^{-1}.
\end{multline*}
This completes the inductive step, and hence the proof.

\end{proof}

\begin{prop}
\label{Tind}

Assume that $p > d$.  Let $V$ be a representation of $G$ over $\F_p$, and suppose that there exists $C, N > 0$ and $R < p^{d^2-1}$ such that $\dim V^{G(n)} \le C R^n$ for all $n \le N$.  Then
\[
\dim V^{T G(n)} \le C \kappa 10^{(d-1)n} R^{(2/3)^{d-1} n} p^{\sigma(d) n}
\]
for all $n \le N$, where $\sigma(d) = d(d-1) - (2/3)^{d-1}(d^2-1)$, and $\kappa > 0$ depends only on $R$, $p$, and $d$.

\end{prop}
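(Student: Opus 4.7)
The plan is to proceed by induction on $d$. The base case $d=2$ is immediate because $T = S$ in $SL_2$, so Proposition \ref{Sinvariants} gives the result (with $\sigma(2)=0$). Fix $d \ge 3$ and assume the proposition for $d-1$. First I would apply Proposition \ref{Sinvariants} to $V$ to obtain $\dim V^{SG(k)} \le C_1 R_1^k$ for all $k \le N$, where $R_1 = 10 R^{2/3} p^{(d^2-4)/3}$ and $C_1$ depends only on $C, R, p, d$.

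Now fix $n \le N$ and set $W = V^{SG(n)}$. Since $S$ commutes with the upper-left block copy $G_{d-1} \subset G$ and $G_{d-1}$ normalizes $G(n)$, the finite-dimensional space $W$ becomes a representation of $G_{d-1}$, and the plan is to apply the inductive hypothesis to it. This requires establishing a bound $\dim W^{G_{d-1}(k)} \le \tilde C \tilde R^k$ with $\tilde R < p^{(d-1)^2-1}$. For $k \le n$, one checks that $SG(n)G_{d-1}(k) \subseteq SG(k)$ with index $p^{2(n-k)(d-1)}$; the key step is the identity $S \cap G(n)G_{d-1}(k) = S \cap G(n)$, which uses $p > d$: writing $s = \text{diag}(x,\ldots,x,x^{1-d})$ with $x \in 1+p\Z_p$ and using that the $(d,d)$-entry of every $g \in G(n)$ is $\equiv 1 \pmod{p^n}$, one deduces $x^{1-d} \equiv 1 \pmod{p^n}$, and since $1-d \in \Z_p^\times$ the map $x \mapsto x^{1-d}$ is an automorphism of $1+p\Z_p$, forcing $x \equiv 1 \pmod{p^n}$. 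Lemma \ref{subgroup} then gives
\[
\dim W^{G_{d-1}(k)} \le p^{2(n-k)(d-1)} C_1 R_1^k = C_1 p^{2n(d-1)} \tilde R^k, \qquad \tilde R = R_1 p^{-2(d-1)};
\]
the hypothesis $R < p^{d^2-1}$ (together with $p$ large in terms of $d$) ensures $\tilde R < p^{(d-1)^2-1}$. If $p$ is too small for this, the trivial bound $\dim V^{TG(n)} \le \dim V^{G(n)} \le CR^n$ already beats the claim up to the constant $\kappa$, which is allowed to depend on $p$.

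Applying the inductive hypothesis to $W$ and evaluating at $k = n$ yields a bound on $\dim W^{T_{d-1} G_{d-1}(n)}$. The decomposition $T = S T_{d-1}$ gives $V^{TG(n)} = W^{T_{d-1}}$, and because $G_{d-1}(n) \subseteq G(n)$ already acts trivially on $W$ this coincides with $W^{T_{d-1}G_{d-1}(n)}$. Expanding the resulting product one obtains a bound of the form $C\kappa \cdot 10^{[(d-2)+(2/3)^{d-2}]n} R^{(2/3)^{d-1}n} p^{En}$ where
\[
E = 2(d-1)\bigl(1 - (2/3)^{d-2}\bigr) + \frac{(d^2-4)(2/3)^{d-2}}{3} + \sigma(d-1).
\]
Writing $a = (2/3)^{d-2}$ and simplifying collapses $E$ to $d(d-1) - (2a/3)(d^2-1) = \sigma(d)$, and one checks $(d-2) + a \le d-1$ for $d \ge 3$, matching the claimed bound.

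The main obstacle is the index computation $|SG(k) : SG(n)G_{d-1}(k)| = p^{2(n-k)(d-1)}$: one must write an arbitrary element of $SG(n)G_{d-1}(k)$ as a product $sgh$ and pin down precisely which $s \in S$ can appear, which is exactly where $p > d$ enters. Once the index is in hand the value of $\tilde R$ is forced, and the remaining exponent bookkeeping is purely mechanical but must be set up with the specific $\tilde R$ above so that the $p$-exponent lands exactly on $\sigma(d)$ rather than something larger.
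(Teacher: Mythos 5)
Your proposal follows the paper's proof almost exactly: induction on $d$ with base case Proposition \ref{Sinvariants}, restriction of the $G_{d-1}$-action to $V^{SG_d(n)}$, the index computation $|SG_d(k) : SG_d(n)G_{d-1}(k)| = p^{(2d-2)(n-k)}$ via Lemma \ref{subgroup}, and the same exponent arithmetic collapsing to $\sigma(d)$ (your $E$ is literally the paper's expression $(2d-2) + (2/3)^{d-2}(d^2-6d+2)/3 + \sigma(d-1)$ after noting $(d^2-6d+2)/3 = (d^2-4)/3 - 2(d-1)$). Your extra detail on why $S \cap G(n)G_{d-1}(k) = S\cap G(n)$, using that $1-d \in \Z_p^\times$, is a point the paper leaves implicit.

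There is one bookkeeping slip worth fixing. You fold the factor $10$ from Proposition \ref{Sinvariants} into the new ratio, taking $\tilde R = 10\,R^{2/3}p^{(d^2-6d+2)/3}$. The hypothesis $R < p^{d^2-1}$ then only gives $\tilde R < 10\,p^{(d-1)^2-1}$, so the inductive hypothesis $\tilde R < p^{(d-1)^2-1}$ genuinely fails when $R \ge 10^{-3/2}p^{d^2-1}$ — and this is not a matter of $p$ being small. Your fallback via the trivial bound does not rescue this regime either: for $R$ near $p^{d^2-1}$ one needs $R^{1-(2/3)^{d-1}} \le 10^{d-1}p^{\sigma(d)}$, and the ratio of the two sides is about $(p/10)^{d-1}$, which exceeds $1$ for $p > 10$. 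The paper avoids this entirely by bounding $10^k \le 10^n$ for $k \le n$ and absorbing $10^n$ into the constant $C_0 = C\kappa\,10^n p^{(2d-2)n}$, so that the inductive ratio is $R_0 = R^{2/3}p^{(d^2-6d+2)/3}$ with no factor of $10$; then $R < p^{d^2-1}$ gives $R_0 < p^{(d-1)^2-1}$ directly. With that one change your argument is complete and coincides with the paper's.
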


\begin{proof}

We proceed by induction on $d$, using Proposition \ref{Sinvariants}.  We let $\kappa > 0$ denote a constant depending only on $R$, $p$, and $d$ that may vary from line to line.  The base case of $d = 2$ is exactly the statement of Proposition \ref{Sinvariants}.  We next prove it for a given $d \ge 3$, assuming it holds for $d-1$.  Let $V$, $N$, $R$, and $C$ be as in the statement of the proposition.  We think of the groups $G_{d-1}(n)$ as embedded in $G_d$ in the upper left block.  For any $k \le n \le N$, we may apply Lemma \ref{subgroup} to the groups $S G_d(k) \geqslant S G_d(n) G_{d-1}(k)$ to obtain
\[
\dim V^{S G_d(n) G_{d-1}(k)} \le | S G_d(k) : S G_d(n) G_{d-1}(k) | \dim V^{S G_d(k)};
\]
note that $S G_d(n) G_{d-1}(k)$ is in fact a group, because $S$ and $G_{d-1}(k)$ commute and they both normalize $G_d(n)$.  Combining this with
\[
| S G_d(k) : S G_d(n) G_{d-1}(k) | = p^{\dim( G_d / S G_{d-1} )(n-k)} = p^{(2d-2)(n-k)}
\]
and the bound for $\dim V^{S G_d(k)}$ from Proposition \ref{Sinvariants} gives
\[
\dim V^{S G_d(n) G_{d-1}(k)} \le p^{(2d-2)(n-k)} C \kappa 10^k R^{2k/3} p^{(d^2-4)k/3} \le C \kappa 10^n p^{(2d-2)n} R^{2k/3} p^{(d^2 - 6d+2)k/3}.
\]
This implies that we may invoke the induction hypothesis for the representation of $G_{d-1}$ on $V_0 = V^{S G_d(n)}$ with data $C_0 = C \kappa 10^n p^{(2d-2)n}$, $R_0 = R^{2/3} p^{(d^2 - 6d+2)/3}$, and $N_0 = n$.  (It may be checked that $R_0 < p^{(d-1)^2 - 1}$.)  If $T_0$ is the diagonal subgroup of $G_{d-1}$, this gives

\begin{align*}
\dim V^{T G_d(n)} & =  \dim V_0^{T_0 G_{d-1}(n)} \\
& \le C_0 \kappa 10^{(d-2)n} R_0^{(2/3)^{d-2} n} p^{\sigma(d-1) n} \\
& = C \kappa 10^{(d-1) n} p^{(2d-2)n} R_0^{(2/3)^{d-2} n} p^{\sigma(d-1) n} \\
& = C \kappa 10^{(d-1) n} p^{(2d-2)n} R^{(2/3)^{d-1} n} p^{(2/3)^{d-2} n (d^2 - 6d+2)/3} p^{\sigma(d-1) n}.
\end{align*}
The proposition now follows after checking that the exponent of $p$ satisfies
\[
(2d-2) + (2/3)^{d-2} (d^2 - 6d+2)/3 + \sigma(d-1) = \sigma(d).
\]
\end{proof}

Finally, we deduce Theorem \ref{Tgen} from Proposition \ref{Tind}.  We are given that the conditions of the proposition hold with $R = p^{d^2-2}$, some $C > 0$, and any $N$, and it may be checked that in this case the proposition gives exactly the conclusion of Theorem \ref{Tgen}.

\end{document}